\newtheorem{theorem}{Theorem}[section]
\newtheorem{lemma}[theorem]{Lemma}
\theoremstyle{definition}
\newtheorem{remark}{Remark}
\newcommand{\R}{\mathbb{R}}
\newcommand{\Z}{\mathbb{Z}}
\renewcommand{\S}{\mathbb{S}}
\newcommand{\T}{\mathbb{T}}
\newcommand{\FE}{\mathbf{E}}
\newcommand{\FP}{\mathbf{P}}
\newcommand{\FI}{\mathbf{I}}
\newcommand{\na}{\nabla}
\newcommand{\ga}{\gamma}
\newcommand{\si}{\sigma}
\newcommand{\pa}{\partial}
\newcommand{\Ga}{\Gamma}
\renewcommand{\Re}{\operatorname{Re}}
\begin{document}

\title[Global Solutions of Non-cutoff Boltzmann Equation]{Global solutions in $W_k^{\zeta,p}L^\infty_TL^2_v$ for the Boltzmann equation without cutoff}

\author[H.-Y. Zhang]{Haoyu Zhang}
\address[H.-Y. Zhang]{Department of Mathematics, Sichuan University, Chengdu 610065, P.R.~China}
\email{hyzhang116@163.com}

\begin{abstract}
The Boltzmann equation without an angular cutoff in a three-dimensional periodic domain is considered. The global-in-time existence of solutions in a function space $ W_k^{\zeta,p}L^\infty_TL^2_v $ with $p>1$ and $\zeta>3(1-\frac{1}{p})$ is established in the perturbation framework and the long-time behavior of solutions is also obtained for both hard and soft potentials. The proof is based on several norm estimates. 
\end{abstract}

\subjclass{Primary: 35Q20; Secondary: 35A01.}
\keywords{Kinetic Theory, Boltzmann equation, global-in-time solutions, long-time behaviors, small-amplitude initial data}
 
\date{\today}
\maketitle


\thispagestyle{empty}


\section{Introduction}
In this paper we are concerned with the Cauchy problem on the non-cutoff Boltzmann equation in a torus domain
\begin{equation}\label{Beq}
\pa_t F+v\cdot \na_x F=Q(F,F)
\end{equation}
with
\begin{equation}\label{Bid}
F(0,x,v)=F_0(x,v).
\end{equation} 
Here the unknown $F=F(t,x,v)\geq 0$ denotes the density distribution function of gas particles with position $x\in \T^3$ and velocity $v\in \R^3$ at time $t>0$. The right hand side of (\ref{Beq}) is the non-cutoff Boltzmann collision operator of the form
\begin{equation}\label{bBop}
Q(G,F)(v)=\int_{\R^3}\int_{\S^2}B(v-u,\si)
\left[G(u')F(v')-G(u)F(v)\right]\,d\si
\,d u.
\end{equation}
Here $ u' $ and $ v' $ satisfy
\begin{equation}\label{velocity}
\left\{\begin{split}
u'=\frac{v+u}{2}-\frac{|v-u|}{2}\si,\\
v'=\frac{v+u}{2}+\frac{|v-u|}{2}\si,
\end{split}\right.\quad \si\in\S^2.
\end{equation}
The collision kernel is assumed to take the form of
\begin{equation*}
B(v-u,\si)=C_B |v-u|^{\ga}b(\cos\theta),
\end{equation*}
where $C_B>0$ is a constant, $\cos\theta=\si\cdot (v-u)/|v-u|$, and we assume 
\begin{equation*}
-3<\gamma\leq 1,
\end{equation*} 
and  
\begin{equation*}
\frac{1}{C_b\theta^{1+2s}}\leq \sin\theta b(\cos\theta)\leq \frac{C_b}{\theta^{1
		+2s}},\quad 
		0<s<1,
\end{equation*}
for a constant $ C_{b}>0 $. In the rest of this paper, we use the terminology \textit{hard} potentials if $ \gamma+2s\geq0 $ and \textit{soft} potentials if $ \gamma+2s<0 $.

The Boltzmann equation is one of the fundamental models in collisional kinetic theory. There have been extensive works on the mathematical study of the Boltzmann equation in different aspects. Here we focus on the global existence and large time behavior of solutions in the non-cutoff case. Some basic properties of the Boltzmann collision operator without cutoff were studied in Pao \cite{Pao}. The local-in-time smooth solution in Gevrey class was constructed by Ukai \cite{Ukai84}. The global weak solutions for general data was obtained by Arkeryd \cite{Ark} in the spatially homogeneous case and by Alexandre-Villani \cite{AV02} in the spatially inhomogeneous case. Although Ukai \cite{Ukai74} in 1974 gave the first result of unique global existence of solutions near global Maxwellians for the Boltzmann equation under the Grad's cutoff assumption, it is more than thirty years later that AMUXY \cite{AMUXY,AMUXY11} and Gressman-Strain \cite{GS} independently obtained the global well-posedness theory in the new perturbation frame for the Boltzmann equation without angular cutoff. Those works \cite{AMUXY,AMUXY11,GS} are based on velocity regularity of the collision operator Alexandre-Desvillettes-Villani-Wennberg \cite{ADVB}, coercivity estimate on the linearized operator Mouhot \cite{Mo} and Mouhot-Strain\cite{MSt} and the energy method Liu-Yu \cite{LY04}, Liu-Yang-Yu \cite{LYY04} and Guo \cite{Guo04,G02}. Whenever initial data are close to global Maxwellians with an algebraic large-velocity decay, motivated by a fundamental progress Gualdani-Mischler-Mouhot \cite{GMM} in the cutoff case, there have appeared many important results on global existence for the non-cutoff Boltzmann equation in the perturbation framework, for instance, Alonso-Morimoto-Sun-Yang \cite{AMSY}, He-Jiang \cite{HJ} and H\'erau-Tonon-Tristani \cite{HTT}. For the large time behavior of solutions, we would only mention those results in the soft potential cases, for instance, Caflisch \cite{C}, Strain-Guo \cite{SG1,SG2} and Strain \cite{S}.

The current work is inspired by a recent interesting work Duan-Liu-Sakamoto-Strain \cite{DLSS} on the global existence of lower regularity solutions to the Boltzmann equation without angular cutoff. In fact, it is a big open problem to characterize the optimal mathematical space of initial data with lower regularity in space and velocity variables such that unique solutions may exist globally in time, cf.~\cite{Vi02}. The authors in \cite{DLSS} introduced a new function space such that the norm in $A(\T^3)$ with respect to space variables is finite, where $A(\T^3)$ is the Wiener algebra denoting the set of functions with Fourier series in $\ell^1$. Our goal in this paper is to extend $\ell^1$ to $\ell^p$ for $p>1$ by including some additional space derivatives.

In what follows, we reformulate the Cauchy problem \eqref{Beq} and \eqref{Bid} in the perturbation framework in the usual way.  We recall that the global Maxwellian equilibrium is 
\begin{equation*}
\mu(v)=(2\pi)^{-\frac{3}{2}}e^{-\frac{|v|^{2}}{2}}.
\end{equation*} 
We seek for solutions of the form
\begin{equation*}
F(t,x,v)=\mu+{\mu}^{\frac{1}{2}}f(t,x,v).
\end{equation*}
Plug it into (\ref{Beq}) and (\ref{Bid}) to get
\begin{equation}\label{LBeq}
\pa_tf+v\cdot\na_xf
+Lf=\Ga(f,f)
\end{equation}
with initial data
\begin{equation}\label{idf}
f(0,x,v)=f_0(x,v):=\mu^{-1/2}[F_0(x,v)-\mu].
\end{equation}
Here, in terms of \eqref{bBop}, we have denoted the linearized collision operator $ L $ as
\begin{equation}\label{Ldef}
Lf=-\mu^{-\frac{1}{2}}\left\{Q(\mu,\mu^{\frac{1}{2}}f)+Q(\mu^{\frac{1}{2}}f,\mu)\right\}
\end{equation}
and the nonlinear collision operator as 
\begin{equation*}
\Gamma(f,f)=\mu^{-\frac{1}{2}}Q(\mu^{\frac{1}{2}}f,\mu^{\frac{1}{2}}f).
\end{equation*}
Furthermore, we assume that the following conservation laws
\begin{eqnarray}
&&\int_{\T^3}\int_{\R^3} \mu^{\frac{1}{2}}f(t,x,v)\,dv\,dx=0,\label{pt.id.cl.1}\\
&&\int_{\T^3}\int_{\R^3} v_i\mu^{\frac{1}{2}}f(t,x,v)\,dv\,dx=0,\quad i=1,2,3,\label{pt.id.cl.2}\\
&&\int_{\T^3}\int_{\R^3}|v|^2 \mu^{\frac{1}{2}}f(t,x,v)\,dv\,dx=0,\label{pt.id.cl.3}
\end{eqnarray}
hold true for any $t\geq 0$, and, in particular, they are satisfied for initial data $f_0(x,v)$ at $t=0$. In sum, we are going to study the Cauchy problem \eqref{LBeq} and (\ref{idf}) with (\ref{pt.id.cl.1}), (\ref{pt.id.cl.2}) and (\ref{pt.id.cl.3}). 

Next, motivated by \cite{DLSS,LS,MS}, we introduce function spaces and norms used in this paper.  First of all, we define
\begin{equation*}
\|f\|_{W^{\zeta,p}_{k}L^{\infty}_{T}L^{2}_{v}}=\{\int_{\Z^{3}_{k}}\langle k \rangle^{\zeta p}\sup_{t\in[0,T]}\|\hat{f}(t,k,\cdot)\|^{p}_{L^{2}_{v}}\,d\,\Sigma(k)\}^{\frac{1}{p}}.
\end{equation*}
Here the Fourier transformation is taken with respect to $ x\in \T^{3} $ and the notation $ \int_{\Z^{3}_{k}}(\cdot)\,d\,\Sigma(k) $ means integration over the counting measure on $ \Z^{3} $. In the rest of this paper, we always assume that $p>1$ and $ \zeta>3(1-\frac{1}{p}) $.
\begin{remark}
We expect our solution space to have the property 
$$ 
\|fg\|_{X}\leq C \|f\|_{X}\|g\|_{X}.
$$
For the Sobolev space $W^{\zeta,p}$, we can embed it into $ L^{\infty} $ provided $ \zeta>3(1-\frac{1}{p}) $. Thus, under the assumption that $ \zeta>3(1-\frac{1}{p}) $, this property holds. This is why we propose the assumption $ \zeta>3(1-\frac{1}{p}) $ and it will play an important role as we proceed.
\end{remark}
To treat the long-time behavior in the soft potential case, it is necessary to introduce the following exponential velocity weight. As in \cite{DLSS,DLYZ-VMB}, 
We set
\begin{equation}\label{dvw}
w_{q,\theta}:=e^{q\langle v \rangle^{\theta}},
\end{equation}
where $ \langle v \rangle $ denotes $ \sqrt{1+|v|^{2}} $ and $ q $, $ \theta $ satisfy
\begin{equation}\label{vw}
\left\{\begin{aligned}
&\text{if $ \gamma+2s\geq0 $, then $ q=0 $};\\
&\text{if $-3<\gamma<-2s$, then $ q>0 $ and $ \theta=1 $.}
\end{aligned}\right.
\end{equation}
Thus, we can naturally define the following velocity-weighted norm:
\begin{equation*}
\|w_{q,\theta}f\|_{W^{\zeta,p}_{k}L^{\infty}_{T}L^{2}_{v}}:=\{\int_{\Z^{3}_{k}}\langle k \rangle^{\zeta p}\sup_{t\in[0,T]}\|w_{q,\theta}\hat{f}(t,k,\cdot)\|^{p}_{L^{2}_{v}}\,d\,\Sigma(k)\}^{\frac{1}{p}}.
\end{equation*}
Next, we define the useful velocity weighted $D$-norm
\begin{equation*}
\begin{aligned}
&|w_{q,\theta}f|_{D}^{2}\\
:=&\int_{\R^{3}_{v}}\int_{\R^{3}_{u}}\int_{\S^{2}}B(v-u,\sigma)w^{2}_{q,\theta}(v)\mu(u)(f(v')-f(v))\overline{(f(v')-f(v))}\,d\sigma\,du\,dv\\
&+\int_{\R^{3}_{v}}\int_{\R^{3}_{u}}\int_{\S^{2}}B(v-u,\sigma)w^{2}_{q,\theta}(v)f(u)\overline{f(u)}(\mu^{\frac{1}{2}}(v')-\mu^{\frac{1}{2}}(v))^{2}\,d\sigma\,du\,dv,
\end{aligned}
\end{equation*}
where $ u' $ and $ v' $ are the determined in (\ref{velocity}).
Then we define the following weighted dissipation rate functional
\begin{equation*}
\|w_{q,\theta}f\|_{W^{\zeta,p}_{k}L^{2}_{T}L^{2}_{v,D}}:=\{\int_{\Z^{3}_{k}}\langle k \rangle^{\zeta p}(\int_{0}^{T}|w_{q,\theta}f(t,k)|^{2}_{D}\,dt)^{\frac{p}{2}}\,d\,\Sigma(k)\}^{\frac{1}{p}}.
\end{equation*}
The following notation is introduced to describe the long-time decay rate.
\begin{equation}\label{kappa}
\kappa=\left\{\begin{aligned}
&\text{1}&\text{for $q=0$ and $ \gamma+2s\geq0 $};\\
&\text{$\frac{1}{1+|\gamma+2s|}$}&\text{for $ q>0 $ and $ -3<\gamma<-2s $}.
\end{aligned}\right.
\end{equation}
In the rest of this article, $ C $ denotes a positive constant. $ A\lesssim B $ means that there exists a positive constant $ C $ such that $ A\leq CB $.

Now we are able to state the main result of this article.

\begin{theorem}\label{theorem1}
Let $ p>1 $ and $ \zeta>3(1-\frac{1}{p}) $. Consider problem {\rm(\ref{LBeq})} and {\rm(\ref{idf})} with {\rm(\ref{pt.id.cl.1})}, {\rm(\ref{pt.id.cl.2})} and {\rm(\ref{pt.id.cl.3})} in a torus domain. Assume $ w_{q,\theta} $ satisfies {\rm(\ref{vw})}. There exists $ \epsilon_{0}>0 $ such that if $ \mu+\mu^{\frac{1}{2}}f_{0}(x,v)\geq0 $ and 
\begin{equation*}
\|w_{q,\theta}f_{0}\|_{W^{\zeta,p}_{k}L^{2}_{v}}\leq\epsilon_{0},
\end{equation*}
then there exists a unique global solution $ f(t,x,v) $, $ t>0 $, $ x\in \T^{3} $, $ v\in \R^{3} $ to the above problem, which satisfies that $ \mu+\mu^{\frac{1}{2}}f(t,x,v)\geq0 $ and the uniform estimate
\begin{equation}\label{ne}
\|w_{q,\theta}f\|_{W^{\zeta,p}_{k}L^{\infty}_{T}L^{2}_{v}}+\|w_{q,\theta}f\|_{W^{\zeta,p}_{k}L^{2}_{T}L^{2}_{v,D}}\lesssim\|w_{q,\theta}f_{0}\|_{W^{\zeta,p}_{k}L^{2}_{v}}
\end{equation}
for any $ T>0 $. Moreover, if $ \kappa $ satisfies {\rm(\ref{kappa})}, then there exists a $ \lambda>0 $ such that the solution satisfies the decay rate
\begin{gather}\label{thm1.decay}
\|f\|_{W^{\zeta,p}_{k}L^{2}_{v}}\lesssim e^{-\lambda t^{\kappa}}\|w_{q,\theta}f_{0}\|_{W^{\zeta,p}_{k}L^{2}_{v}}.
\end{gather}
\end{theorem}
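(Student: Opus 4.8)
The plan is a continuity (bootstrap) argument for the functional
\[
\mathcal{E}(T):=\|w_{q,\theta}f\|_{W^{\zeta,p}_{k}L^{\infty}_{T}L^{2}_{v}}+\|w_{q,\theta}f\|_{W^{\zeta,p}_{k}L^{2}_{T}L^{2}_{v,D}},
\]
driven by energy estimates performed mode by mode in the spatial Fourier variable. Taking the Fourier transform of \eqref{LBeq} in $x$, the transport term becomes multiplication by $i(v\cdot k)$, which is skew--adjoint on $L^{2}_{v}$ and remains so after multiplication by the real weight $w_{q,\theta}^{2}$, hence drops out of the weighted identity
\[
\tfrac12\tfrac{d}{dt}\|w_{q,\theta}\hat f(t,k)\|_{L^{2}_{v}}^{2}+\Re\langle w_{q,\theta}^{2}L\hat f(t,k),\hat f(t,k)\rangle=\Re\langle w_{q,\theta}^{2}\widehat{\Gamma(f,f)}(t,k),\hat f(t,k)\rangle .
\]
For the linear term I would invoke the weighted coercivity of $L$ (as in \cite{DLSS} and references therein): there is $\delta_{0}>0$ with $\Re\langle w_{q,\theta}^{2}Lg,g\rangle\ge\delta_{0}|w_{q,\theta}g|_{D}^{2}-C|g\,\mathbf{1}_{\{|v|\le C\}}|_{L^{2}_{v}}^{2}$, the lower--order remainder being absorbed by the $D$--norm of the microscopic part $(I-P)\hat f(t,k)$, while for $k=0$ the conservation laws \eqref{pt.id.cl.1}--\eqref{pt.id.cl.3} force $P\hat f(t,0,\cdot)=0$ and $L$ is fully coercive there. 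For $k\ne0$ the missing dissipation on the hydrodynamic part $P\hat f(t,k)$ must be recovered from the streaming term: I would introduce a Kawashima--type interaction functional $\mathcal{I}(t,k)$, built from low--order velocity moments of $\hat f(t,k)$ contracted against $k/|k|$, whose time derivative produces $c\,|k|^{2}(1+|k|^{2})^{-1}|P\hat f(t,k)|_{L^{2}_{v}}^{2}$ up to a small multiple of $|(I-P)\hat f(t,k)|_{D}^{2}$ and quadratic remainders; since $|k|\ge1$ on $\T^{3}$, the prefactor is bounded below, so a suitable combination $\|w_{q,\theta}\hat f(t,k)\|_{L^{2}_{v}}^{2}+\eta\,\Re\mathcal{I}(t,k)$ obeys a differential inequality yielding dissipation $\gtrsim|\hat f(t,k)|_{D}^{2}$ for every $k$. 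Integrating in $t$, raising to the power $p/2$, multiplying by $\langle k\rangle^{\zeta p}$ and summing over $k\in\Z^{3}$ gives
\[
\mathcal{E}(T)^{p}\lesssim\|w_{q,\theta}f_{0}\|_{W^{\zeta,p}_{k}L^{2}_{v}}^{p}+\int_{\Z^{3}_{k}}\langle k\rangle^{\zeta p}\Big(\int_{0}^{T}\big|\langle w_{q,\theta}^{2}\widehat{\Gamma(f,f)}(t,k),\hat f(t,k)\rangle\big|\,dt\Big)^{\frac{p}{2}}d\Sigma(k).
\]

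The nonlinear sum is where the hypothesis $\zeta>3(1-\tfrac1p)$ is essential. Writing $\widehat{\Gamma(f,f)}(k)=\sum_{\ell\in\Z^{3}}\Gamma(\hat f(\ell),\hat f(k-\ell))$ and applying the weighted trilinear estimate for the non--cutoff operator in the velocity variable, schematically $|\langle w_{q,\theta}^{2}\Gamma(g,h),\phi\rangle|\lesssim|w_{q,\theta}g|_{L^{2}_{v}}\,|w_{q,\theta}h|_{D}\,|w_{q,\theta}\phi|_{D}$ (up to symmetric placements of the weight and commutator terms handled as in \cite{DLSS}), then using $\langle k\rangle^{\zeta}\lesssim\langle\ell\rangle^{\zeta}+\langle k-\ell\rangle^{\zeta}$ to distribute the spatial weight, the $k$--sum is estimated by Young's convolution inequality $\ell^{1}\ast\ell^{p}\hookrightarrow\ell^{p}$ (together with a Cauchy--Schwarz in $k$ and in $t$). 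The $\ell^{1}$ factor is controlled via $\|(b_{m})\|_{\ell^{1}}\le\|\langle\cdot\rangle^{-\zeta}\|_{\ell^{p'}}\,\|(\langle m\rangle^{\zeta}b_{m})\|_{\ell^{p}}$, and $\langle\cdot\rangle^{-\zeta}\in\ell^{p'}(\Z^{3})$ precisely when $\zeta p'>3$, i.e.\ $\zeta>3(1-\tfrac1p)$ --- equivalently the embedding $W^{\zeta,p}(\T^{3})\hookrightarrow L^{\infty}$ of the Remark. This produces a closed estimate of the form $\mathcal{E}(T)\le C_{0}\|w_{q,\theta}f_{0}\|_{W^{\zeta,p}_{k}L^{2}_{v}}+C_{1}\mathcal{E}(T)^{3/2}$; taking $\epsilon_{0}$ small and running the continuity argument off a local--in--time solution gives $\mathcal{E}(T)\le2C_{0}\|w_{q,\theta}f_{0}\|_{W^{\zeta,p}_{k}L^{2}_{v}}$ for all $T>0$, which is \eqref{ne}. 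Local existence, uniqueness and propagation of $\mu+\mu^{1/2}f\ge0$ I would obtain by the standard iteration scheme on the mild formulation, placing $v\cdot\na_{x}$ and the coercive part of $L$ in the evolution semigroup and the nonnegative gain part of $\Gamma$ as the source, the same linear and nonlinear estimates making the scheme contractive in $W^{\zeta,p}_{k}L^{\infty}_{T}L^{2}_{v}\cap W^{\zeta,p}_{k}L^{2}_{T}L^{2}_{v,D}$; the uniform bound then continues the solution to all $T>0$.

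For the long--time behaviour, in the hard--potential case ($q=0$, $\gamma+2s\ge0$) the dissipation controls the full $L^{2}_{v}$ norm (spectral gap, $|g|_{D}^{2}\gtrsim|(I-P)g|_{L^{2}_{v}}^{2}$), and combining with the macroscopic functional for $k\ne0$ and the conservation laws for $k=0$, the per--mode estimate upgrades to $\frac{d}{dt}\mathcal{D}(t,k)+\lambda\mathcal{D}(t,k)\le(\text{nonlinear})$ for an instantaneous energy $\mathcal{D}(t,k)\simeq\|\hat f(t,k)\|_{L^{2}_{v}}^{2}$, the nonlinear part absorbed by the smallness in \eqref{ne}; summing against $\langle k\rangle^{\zeta p}$ gives exponential decay, $\kappa=1$. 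In the soft--potential case ($q>0$, $\theta=1$, $-3<\gamma<-2s$) the dissipation loses a factor $\langle v\rangle^{\gamma+2s}$, and I would run the classical time--weighted splitting argument (Caflisch \cite{C}, Strain--Guo \cite{SG1,SG2}, \cite{S}, cf.\ \cite{DLSS}): on $\{|v|\le R\}$ the dissipation still controls $\langle R\rangle^{\gamma+2s}$ times the $L^{2}_{v}$ mass there, while on $\{|v|>R\}$ the exponential weight gives $|g\,\mathbf{1}_{\{|v|>R\}}|_{L^{2}_{v}}^{2}\lesssim e^{-2qR}|w_{q,\theta}g|_{L^{2}_{v}}^{2}$; optimizing $R=R(t)\sim(\lambda t)^{1/(1+|\gamma+2s|)}$ and using the uniform bound on $\|w_{q,\theta}f\|$ from \eqref{ne} yields $\|f(t)\|_{W^{\zeta,p}_{k}L^{2}_{v}}\lesssim e^{-\lambda t^{\kappa}}\|w_{q,\theta}f_{0}\|_{W^{\zeta,p}_{k}L^{2}_{v}}$ with $\kappa=\frac{1}{1+|\gamma+2s|}$, which is \eqref{thm1.decay}.

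I expect the main obstacle to be the construction of the macroscopic interaction functional that is simultaneously compatible with the $\ell^{p}_{k}$ bookkeeping and delivers dissipation uniformly down to $|k|=1$, together with the intertwined task of fitting the weighted trilinear and $D$--norm estimates for $\Gamma$ into the $\langle k\rangle^{\zeta p}$--weighted $\ell^{p}$ framework; the remaining steps are a careful but essentially routine adaptation of the $p=1$ Wiener--algebra theory of \cite{DLSS}.
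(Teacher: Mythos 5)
Your proposal is correct and follows essentially the same route as the paper: Fourier-mode energy estimates with micro--macro decomposition, recovery of macroscopic dissipation by a moment/interaction functional (the paper imports this from the DLSS macro estimates, which is the same device as your Kawashima functional), the trilinear collision estimate combined with $\ell^{1}\ast\ell^{p}\hookrightarrow\ell^{p}$ and $\langle\cdot\rangle^{-\zeta}\in\ell^{p'}(\Z^{3})$ for $\zeta p'>3$, weighted estimates for soft potentials, and decay via the $e^{\lambda t^{\kappa}}$ multiplier together with the velocity splitting $\{\langle v\rangle\lessgtr\rho t^{r'}\}$ (your Strain--Guo interpolation is the same argument in different clothing). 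The only cosmetic difference is that the paper closes the nonlinear estimate quadratically, $\mathcal{E}\lesssim\|w_{q,\theta}f_{0}\|+\mathcal{E}^{2}$, rather than with the exponent $3/2$, which does not affect the continuity argument.
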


The rest of this paper is devoted to the proof of Theorem \ref{theorem1}. 

\section{The main estimates}
In this part, we will prove the main estimates used in this paper. We first introduce the macro-micro decomposition used in this section. We decompose $ f $ into $ \FP f $ and $ \{\FI-\FP\} f $, where
\begin{equation}\label{mmd}
\FP f=\{a+b\cdot v+(|v|^{2}-3)c\}\mu^{\frac{1}{2}}.
\end{equation}
We further use $ [a,b,c] $ to denote the vector defined by (\ref{mmd}). Now let us recall a lemma.
\begin{lemma}\label{lmmd}
$ L $ is the linearized collision operator {\rm(\ref{Ldef})}. Let $ 0<s<1 $ and $ \gamma>-3 $. Then there exists a uniform constant $ C>0 $ such that
\begin{equation*}
\frac{1}{C}|\{\FI-\FP\}g|_{D}^{2}\leq (Lg,g)_{L^{2}_{v}}\leq C|\{\FI-\FP\}g|^{2}_{D}.
\end{equation*}
\end{lemma}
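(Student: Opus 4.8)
The statement is the by-now-classical coercivity and norm-equivalence property of the linearized non-cutoff operator; I would follow the strategy of Gressman--Strain and AMUXY, adapted to the $D$-norm used here. \emph{Step 1 (reduction to the microscopic part).} Since $\FP$ is the $L^2_v$-orthogonal projection onto $\ker L=\mathrm{span}\{\mu^{1/2},v_i\mu^{1/2},|v|^2\mu^{1/2}\}$ and $L$ is self-adjoint and nonnegative on $L^2_v$, one has $(Lg,g)_{L^2_v}=(L\{\FI-\FP\}g,\{\FI-\FP\}g)_{L^2_v}$, while the right-hand side of the claim only involves $h:=\{\FI-\FP\}g$. Hence it suffices to prove $\tfrac1C|h|_D^2\le (Lh,h)_{L^2_v}\le C|h|_D^2$ for every $h$ with $\FP h=0$. \emph{Step 2 (exact bilinear identity).} Using \eqref{Ldef}, the weak formulation of $Q$ and the pre-post collisional change of variables together with the $u\leftrightarrow v$ symmetry, one derives an identity of the form
\begin{equation*}
(Lh,h)_{L^2_v}=c_0\,|h|_D^2+(\CK h,h)_{L^2_v},
\end{equation*}
with $c_0>0$; the two quadratic pieces of $|\cdot|_D^2$ are \emph{defined} precisely so as to match the two leading terms produced by this computation, and $\CK$ collects the remaining ``cross'' contributions, which are lower order relative to the $D$-norm in the sense that for each $\eta>0$ there is $R_\eta>0$ with $|(\CK h,h)_{L^2_v}|\le \eta|h|_D^2+C_\eta\|h\|_{L^2(|v|\le R_\eta)}^2$ (this splitting of $\CK$ is obtained by truncating the $u$-integral and the velocity region, exploiting the Gaussian decay of $\mu$ and the integrability $\int_0^\pi b(\cos\theta)\theta^2\,d\theta<\infty$ coming from the assumed upper bound on $b$).

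\emph{Step 3 (upper bound, the easy direction).} Rather than route through the compact term, I would estimate $\langle Lf,g\rangle$ directly. Expanding the bilinear form via the collisional symmetries and regrouping into ``diagonal'' and ``cross'' pieces, the diagonal pieces are bounded by products of the two square roots appearing in $|f|_D$ and $|g|_D$, while each cross piece is controlled by Cauchy--Schwarz after using the pointwise bound $(\mu^{1/2}(v')-\mu^{1/2}(v))^2\lesssim \theta^2|v-u|^2\sup|\na\mu^{1/2}|^2$ and, again, $\int b(\cos\theta)\theta^2\,d\theta<\infty$. This yields $|\langle Lf,g\rangle_{L^2_v}|\lesssim |f|_D\,|g|_D$, hence in particular $(Lh,h)_{L^2_v}\lesssim |h|_D^2$, which is the right-hand inequality; note this step needs neither $\FP h=0$ nor any compactness.

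\emph{Step 4 (lower bound, the main point).} From Step 2, $(Lh,h)_{L^2_v}\ge \tfrac{c_0}{2}|h|_D^2-C_1\|h\|_{L^2(|v|\le R_0)}^2$ for fixed $R_0$, $C_1$; this ``coercivity with a local error'' must then be promoted to full coercivity on $\{\FP h=0\}$ by a compactness/contradiction argument. Suppose the lower bound fails; pick $h_n$ with $\FP h_n=0$, $|h_n|_D=1$ and $(Lh_n,h_n)_{L^2_v}\to 0$. The weak coercivity forces $\liminf\|h_n\|_{L^2(|v|\le R_0)}>0$. Extracting a subsequence converging weakly in the Hilbert space associated with $|\cdot|_D$, the non-cutoff velocity-regularity estimate (the first term of $|\cdot|_D^2$ controls an anisotropic fractional Sobolev norm of order $s$ locally in $v$, à la Alexandre--Desvillettes--Villani--Wennberg and Gressman--Strain) gives strong convergence $h_n\to h$ in $L^2(|v|\le R_0)$, so $h\neq 0$; meanwhile $\FP h=0$ since $\FP$ is continuous for the weak topology. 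Lower semicontinuity of $(L\cdot,\cdot)_{L^2_v}$ and $L\ge 0$ then give $(Lh,h)_{L^2_v}=0$, i.e.\ $h\in\ker L$, so $h=\FP h=0$, a contradiction. The main obstacle is exactly this compactness input: one must make precise the local gain of fractional regularity encoded in $|\cdot|_D^2$, uniformly in the parameters, and check it survives for soft potentials where the $\langle v\rangle^{\gamma/2}$ weight in the Dirichlet form degenerates at large $v$ (harmless on the fixed ball $|v|\le R_0$, but it requires care in the coercivity-with-error step).
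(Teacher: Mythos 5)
The paper does not actually prove this lemma: it is quoted as known and attributed to \cite{AMUXY} (see also the equivalent statements in \cite{GS} and \cite{DLSS}), so there is no in-paper argument to compare yours against line by line. Your outline is, however, a faithful reconstruction of how the cited references establish the result: Step 1 (self-adjointness and $\ker L=\mathrm{range}\,\FP$, reducing to $h=\{\FI-\FP\}g$), Step 3 (the trilinear-type bound $|\langle Lf,g\rangle|\lesssim|f|_D|g|_D$ giving the upper inequality), and Step 4 (coercivity with a local $L^2$ error upgraded to full coercivity on $(\ker L)^\perp$ by a compactness--contradiction argument) are exactly the architecture of the AMUXY/Gressman--Strain proofs, and the $D$-norm in this paper is designed precisely so that the two quadratic pieces appear as the leading terms of $(Lh,h)$.

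Two caveats so that you do not mistake the sketch for a proof. First, Step 2 is stated as an ``exact bilinear identity'' $(Lh,h)=c_0|h|_D^2+(\CK h,h)$; in reality the pre--post change of variables produces several cross terms (e.g.\ those involving $\mu^{1/2}(u')\mu^{1/2}(v')-\mu^{1/2}(u)\mu^{1/2}(v)$) whose cancellation structure is the technical heart of the matter, and showing that the remainder satisfies $|(\CK h,h)|\le\eta|h|_D^2+C_\eta\|h\|^2_{L^2(B_{R_\eta})}$ uses the full strength of the cancellation lemma of \cite{ADVB} together with careful decompositions in $\theta$ and $|v-u|$; this is several pages in \cite{AMUXY}, not a routine computation. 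Second, the compactness input in Step 4 --- that the first piece of $|\cdot|_D^2$ controls an anisotropic fractional Sobolev norm of order $s$ locally in $v$, uniformly, so that $D$-bounded sequences are precompact in $L^2(B_{R_0})$ --- is itself a nontrivial theorem (\cite{ADVB}, \cite{GS}) that must be imported, and, as you correctly flag, for soft potentials the degeneration of the $\langle v\rangle^{(\gamma+2s)/2}$ weight means the $D$-norm does not control the unweighted $L^2_v$ norm globally, so the weak-convergence and $\FP$-continuity steps must be run in the weighted space (harmless because $\FP$ pairs against Gaussian-weighted polynomials, but it needs to be said). With those two black boxes supplied from the literature, your argument is correct; the paper simply chooses to cite the result rather than reprove it.
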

This lemma is proved in \cite{AMUXY}. Using this lemma, we could prove the following result.

\begin{lemma}\label{lemma1}
	The following estimate holds for any $ \eta>0 $ uniformly:
	\begin{equation}\label{lemma1.p1}
	\begin{split}
	&\|f\|_{W^{\zeta,p}_{k}L^{\infty}_{T}L^{2}_{v}}+\|\{\text{$ \FI-\FP $}\}f\|_{W^{\zeta,p}_{k}L^{2}_{T}L^{2}_{v,D}}\\	&\lesssim\|f_{0}\|_{W^{\zeta,p}_{k}L^{2}_{v}}+C_{\eta}\|f\|_{W^{\zeta,p}_{k}L^{\infty}_{T}L^{2}_{v}}\|f\|_{W^{\zeta,p}_{k}L^{2}_{T}L^{2}_{v,D}}+\eta\|\{\text{$ \FI-\FP $}\}f\|_{W^{\zeta,p}_{k}L^{2}_{T}L^{2}_{v,D}}.
	\end{split}
	\end{equation}
\end{lemma}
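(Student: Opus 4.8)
The goal is an energy-type inequality in the $W^{\zeta,p}_k$ framework obtained by working frequency-by-frequency. I would start by taking the Fourier transform of \eqref{LBeq} in $x$, so that for each fixed $k\in\Z^3$ the function $\hat f(t,k,v)$ solves $\partial_t\hat f + i(v\cdot k)\hat f + L\hat f = \widehat{\Gamma(f,f)}(k)$. Pairing with $\hat f$ in $L^2_v$ and taking real parts kills the transport term (since $\Re\langle i(v\cdot k)\hat f,\hat f\rangle=0$), giving $\tfrac12\tfrac{d}{dt}\|\hat f(t,k)\|_{L^2_v}^2 + (L\hat f,\hat f)_{L^2_v} = \Re(\widehat{\Gamma(f,f)}(k),\hat f(t,k))_{L^2_v}$. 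By Lemma \ref{lmmd}, the linear term is bounded below by $\tfrac1C|\{\FI-\FP\}\hat f(t,k)|_D^2$. Integrating in $t\in[0,T]$ yields, for each $k$,
\[
\sup_{t\le T}\|\hat f(t,k)\|_{L^2_v}^2 + \tfrac1C\int_0^T|\{\FI-\FP\}\hat f(t,k)|_D^2\,dt \lesssim \|\hat f_0(k)\|_{L^2_v}^2 + \int_0^T|\Re(\widehat{\Gamma(f,f)}(k),\hat f(t,k))_{L^2_v}|\,dt.
\]

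**From the pointwise-in-$k$ bound to the norm.** Next I would multiply by $\langle k\rangle^{\zeta p}$, raise to the power $p/2$, and integrate over the counting measure on $\Z^3$. Using $(A+B)^{p/2}\lesssim A^{p/2}+B^{p/2}$ I split into the data term, which gives exactly $\|f_0\|_{W^{\zeta,p}_kL^2_v}^p$, and the nonlinear term. The subtlety is that $\sup_t$ and the $t$-integral of the dissipation must be combined \emph{before} taking the $k$-integral so that the left-hand side produces $\|f\|_{W^{\zeta,p}_kL^\infty_TL^2_v}$ and $\|\{\FI-\FP\}f\|_{W^{\zeta,p}_kL^2_TL^2_{v,D}}$; this is routine once the pointwise inequality is in hand.

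**The nonlinear term — the main obstacle.** The crux is to estimate $\int_{\Z^3_k}\langle k\rangle^{\zeta p}\big(\int_0^T|(\widehat{\Gamma(f,f)}(k),\hat f(t,k))_{L^2_v}|\,dt\big)^{p/2}\,d\Sigma(k)$ and show it is controlled by $\|f\|_{W^{\zeta,p}_kL^\infty_TL^2_v}\|f\|_{W^{\zeta,p}_kL^2_TL^2_{v,D}}\big(C_\eta\|f\|_{W^{\zeta,p}_kL^\infty_TL^2_v} + \eta\|\{\FI-\FP\}f\|_{\dots}\big)$-type quantities matching the right side of \eqref{lemma1.p1}. I would use the trilinear estimate for $\Gamma$ (of the form $|(\Gamma(g,h),\varphi)_{L^2_v}|\lesssim |g|_{L^2_v}|h|_D|\varphi|_D$, or its variant with weights, which is the standard non-cutoff upper bound and should be cited/quoted from \cite{AMUXY} or its analogues). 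In Fourier variables $\widehat{\Gamma(f,f)}(k)=\sum_{\ell}\Gamma(\hat f(\ell),\hat f(k-\ell))$, so the pairing is bounded by $\sum_{\ell}|\hat f(\ell)|_{L^2_v}\,|\hat f(k-\ell)|_D\,|\hat f(k)|_D$. The key structural input is that $\ell^p$ (here with the $\langle k\rangle^\zeta$ weight, via the embedding $W^{\zeta,p}\hookrightarrow L^\infty$, equivalently $\ell^1$-type control of $\langle k\rangle^{-\zeta}$ against the weighted sequence — this is precisely the point of the Remark) is an algebra under convolution in the relevant weighted sense: $\langle k\rangle^\zeta \lesssim \langle\ell\rangle^\zeta + \langle k-\ell\rangle^\zeta$, and one of the three factors can be measured in $\ell^1_k$ after pulling out $\langle\cdot\rangle^{-\zeta}$ with $\zeta>3(1-1/p)$ so that $\langle\cdot\rangle^{-\zeta}\in\ell^{p'}$. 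A discrete Young / Hölder argument then distributes the three factors: the sup-in-time factor $|\hat f(\ell)|_{L^2_v}$ goes into the $L^\infty_TL^2_v$ norm, and the two $D$-norm factors, after Cauchy–Schwarz in $t$, go into $L^2_TL^2_{v,D}$ norms. Finally I would split $f = \FP f + \{\FI-\FP\} f$ in each slot; the purely macroscopic pieces $\FP f$ are finite-dimensional in $v$ and their $D$-norm is comparable to an $L^2_v$ norm with a $\langle v\rangle^{\gamma/2+s}$-type weight controlled by $L^\infty_TL^2_v$, producing the $C_\eta\|f\|_{L^\infty_TL^2_v}\cdot\|f\|_{L^2_TL^2_{v,D}}$ terms, while any term genuinely requiring the full $D$-norm of $f$ (not just of $\{\FI-\FP\}f$) is absorbed by Young's inequality $ab\le \eta a^2 + C_\eta b^2$ into the $\eta\|\{\FI-\FP\}f\|_{W^{\zeta,p}_kL^2_TL^2_{v,D}}$ term on the right. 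The genuinely delicate bookkeeping — and the step I expect to be the main obstacle — is carrying out this convolution/Hölder splitting in the $W^{\zeta,p}_k$ norm while keeping the time integrability correct and ensuring every factor that is not manifestly controlled by the microscopic dissipation is either of lower order (hence absorbable with a small constant) or quadratic in the $L^\infty_TL^2_v$ norm (hence matching the $C_\eta$ term); one must also be careful that the exponent arithmetic $p>1$, $\zeta>3(1-1/p)$ is used exactly where the convolution inequality needs $\langle k\rangle^{-\zeta}\in\ell^{p'}(\Z^3)$.
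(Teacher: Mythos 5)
Your overall route is the one the paper takes: Fourier transform in $x$, a mode-by-mode energy identity whose transport term drops out, coercivity of $L$ on the microscopic part via Lemma \ref{lmmd}, raising to the power $p/2$ and integrating in $k$, and then controlling the trilinear term through the convolution bound $|(\hat{\Gamma}(\hat f,\hat g)(k),\hat h(k))_{L^2_v}|\lesssim \int_{\Z^3_l}\|\hat f(k-l)\|_{L^2_v}|\hat g(l)|_D|\hat h(k)|_D\,d\Sigma(l)$, the weight splitting $\langle k\rangle^{\zeta}\lesssim\langle k-l\rangle^{\zeta}+\langle l\rangle^{\zeta}$, Minkowski's inequality, and H\"older with $\langle\cdot\rangle^{-\zeta}\in\ell^{p'}$ (which is exactly where $\zeta>3(1-\tfrac1p)$ enters). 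All of that matches the paper's proof.

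The one step that is not right as written is your mechanism for producing the term $\eta\|\{\FI-\FP\}f\|_{W^{\zeta,p}_kL^2_TL^2_{v,D}}$. You say that ``any term genuinely requiring the full $D$-norm of $f$ is absorbed by Young's inequality into the $\eta\|\{\FI-\FP\}f\|_{\dots}$ term''---but $ab\le\eta a^2+C_\eta b^2$ cannot convert a factor $|\hat f(k)|_D$ into $\eta|\{\FI-\FP\}\hat f(k)|_D^2$; the macroscopic part of the $D$-norm would survive and could only be absorbed at the cost of a factor growing in $T$ (your suggested comparison of $|\FP f|_D$ with a weighted $L^2_v$ norm leads to $\int_0^T\sup_t(\cdots)\,d\tau\sim T\cdot(\cdots)$, which ruins the uniform-in-$T$ estimate). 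What the paper uses instead is the exact orthogonality identity $(\hat{\Gamma}(\hat f,\hat f),\hat f)_{L^2_v}=(\hat{\Gamma}(\hat f,\hat f),\{\FI-\FP\}\hat f)_{L^2_v}$, valid because $\Gamma(f,f)$ is orthogonal to $\ker L$; this puts $|\{\FI-\FP\}\hat f(k)|_D$ in the test slot of the trilinear bound from the outset, after which Cauchy--Schwarz in $\tau$ and Young's inequality give precisely the $\eta$-term and the $C_\eta\|f\|_{W^{\zeta,p}_kL^\infty_TL^2_v}\|f\|_{W^{\zeta,p}_kL^2_TL^2_{v,D}}$ term. Your proposed macro--micro splitting of the first two slots is also unnecessary, since the $C_\eta$ term on the right of \eqref{lemma1.p1} already carries the full $\|f\|_{W^{\zeta,p}_kL^2_TL^2_{v,D}}$. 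With the orthogonality identity inserted, your argument closes and coincides with the paper's.
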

\begin{proof}
Taking Fourier Transform with respect to $ x $ variable to the equation
\begin{equation}\nonumber
\pa_tf+v\cdot\na_xf
+Lf=\Ga(f,f),
\end{equation}
we have 
\begin{equation}\label{feq}
\pa_t\hat{f}(t,k,v)+iv\cdot\hat{f}(t,k,v)
+L\hat{f}(t,k,v)=\hat{\Ga}(\hat{f},\hat{f})(t,k,v),
\end{equation}
where
\begin{equation}\nonumber
\begin{aligned}
&\hat{\Ga}(\hat{f},\hat{g})(k,v)\\
&=\int_{\R^{3}}\int_{\S^{2}}B(v-u,\sigma)\mu^{\frac{1}{2}}([\hat{f}(u')*\hat{g}(v')](k)-[\hat{f}(u)*\hat{g}(v)](k))\,d\sigma\,du.
\end{aligned}
\end{equation}
The convolution above is taken with respect to $ k\in\Z^{3} $. Taking product with $ \bar{\hat{f}} $ and taking real part, we have
\begin{equation}\nonumber
\frac{1}{2}\frac{d}{dt}|\hat{f}|^{2}+\Re(L\hat{f},\hat{f})=\Re(\Ga(\hat{f},\hat{f}),\hat{f}).
\end{equation}
Here $ (\cdot,\cdot) $ denotes the inner product in the complex plane. We then integrate with respect to $ v $ and then $ t $ to deduce 
\begin{equation}\nonumber
\frac{1}{2}\|\hat{f}\|^{2}_{L^{2}_{v}}+\int_{0}^{t}\Re(L\hat{f},\hat{f})_{L^{2}_{v}}\,d\tau=\frac{1}{2}\|\hat{f_{0}}\|^{2}_{L^{2}_{v}}+\int_{0}^{t}\Re(\Ga(\hat{f},\hat{f}),\hat{f})_{L^{2}_{v}}\,d\tau.
\end{equation}
Applying Lemma \ref{lmmd}, we have
\begin{equation}\nonumber
\frac{1}{2}\|\hat{f}\|^{2}_{L^{2}_{v}}+\frac{1}{C}\int_{0}^{t}|\{\textbf{I}-\textbf{P}\}\hat{f}|_{D}^{2}\,d\tau\leq\frac{1}{2}\|\hat{f_{0}}(k,\cdot)\|_{L^{2}_{v}}^{2}+\int_{0}^{t}|\Re(\hat{\Gamma}(\hat{f},\hat{f}),\hat{f})_{L^{2}_{v}}|\,d\tau.
\end{equation}
When $ p>1 $, we could prove that there exists a constant $ C $ such that for any non-negative numbers $ A $ and $ B $, it holds that
\begin{equation}\nonumber
\frac{1}{C}(A^{p}+B^{p})^{\frac{2}{p}} \leq A^{2}+B^{2}\leq C(A^{p}+B^{p})^{\frac{2}{p}}.
\end{equation}
Applying this inequality, we can deduce that there exists a constant $ C>0 $ such that 
\begin{equation}\nonumber
\begin{split}
&\|\hat{f}(t,k,\cdot)\|^{p}_{L^{2}_{v}}+(\int_{0}^{t}|\{\textbf{I}-\textbf{P}\}\hat{f}|_{D}^{2}\,d\tau)^{\frac{p}{2}}\\
&\leq C\{\|\hat{f}_{0}(k,\cdot)\|^{p}_{L^{2}_{v}}+(\int_{0}^{t}|\Re(\hat{\Gamma}(\hat{f},\hat{f}),\hat{f})_{L^{2}_{v}}|\,d\tau)^{\frac{p}{2}}\}.
\end{split}
\end{equation}
We take supreme with respect to $ t $ in $ [0,T] $ and take product with $ \left\langle k \right\rangle^{\zeta p} $ to get
\begin{equation}\nonumber
\begin{split}
&\left\langle k \right\rangle^{\zeta p}\sup_{t\in[0,T]}\|\hat{f}(t,k,\cdot)\|^{p}_{L^{2}_{v}}+\left\langle k \right\rangle^{\zeta p}(\int_{0}^{T}|\{\textbf{I}-\textbf{P}\}\hat{f}|_{D}^{2}\,d\tau)^{\frac{p}{2}}\\
&\leq C\{\|\left\langle k \right\rangle^{\zeta}\hat{f}_{0}(k,\cdot)\|^{p}_{L^{2}_{v}}+\left\langle k \right\rangle^{\zeta p}(\int_{0}^{T}|\Re(\hat{\Gamma}(\hat{f},\hat{f}),\hat{f})_{L^{2}_{v}}|\,d\tau)^{\frac{p}{2}}\}.
\end{split}
\end{equation}
Integating with respect to $ k $ over $ \mathbb{Z}_{k}^{3} $, we get
\begin{equation}\label{ineq1}
\begin{split}
&\|f\|_{W^{\zeta,p}_{k}L^{\infty}_{T}L^{2}_{v}}^{p}+\int_{\mathbb{Z}^{3}_{k}}(\int_{0}^{T}|\left\langle k \right\rangle^{\zeta}\{\textbf{I}-\textbf{P}\}\hat{f}|_{D}^{2}\,d\tau)^{\frac{p}{2}}\,d\,\Sigma(k)\\
&\leq C\{\|f_{0}\|_{W^{\zeta,p}_{k}L^{2}_{v}}^{p}+\int_{\mathbb{Z}^{3}_{k}}\left\langle k \right\rangle^{\zeta p}(\int_{0}^{T}|\Re(\hat{\Gamma}(\hat{f},\hat{f}),\hat{f})_{L^{2}_{v}}|\,d\tau)^{\frac{p}{2}}\,d\,\Sigma(k)\}.
\end{split}
\end{equation}
We further have the equality $ (\hat{\Gamma}(\hat{f},\hat{f}),\hat{f})_{L^{2}_{v}}=(\hat{\Gamma}(\hat{f},\hat{f}),\{\textbf{I}-\textbf{P}\}\hat{f})_{L^{2}_{v}} $. By \cite[Lemma 3.2]{DLSS}, we have 
\begin{equation}\nonumber
|(\hat{\Gamma}(\hat{f},\hat{g})(k),\hat{h}(k))_{L^{2}_{v}}|\leq C\int_{\mathbb{Z}^{3}_{l}}\|\hat{f}(k-l)\|_{L^{2}_{v}}|\hat{g}(l)|_{D}|\hat{h}(k)|_{D}\,d\,\Sigma(l).
\end{equation}
Applying the above inequality, we have
\begin{equation}\label{ineq5}
\begin{split}
&\int_{\mathbb{Z}^{3}_{k}}\left\langle k \right\rangle^{\zeta p}(\int_{0}^{T}|\Re(\hat{\Gamma}(\hat{f},\hat{f}),\{\textbf{I}-\textbf{P}\}\hat{f})_{L^{2}_{v}}|\,d\tau)^{\frac{p}{2}}\,d\,\Sigma(k)\\
&\leq C\int_{\mathbb{Z}^{3}_{k}}\left\langle k \right\rangle^{\zeta p}\{\int_{0}^{T}(|\{\textbf{I}-\textbf{P}\}\hat{f}(k)|_{D})(\int_{\mathbb{Z}^{3}_{l}}\|\hat{f}(k-l)\|_{L^{2}_{v}}|\hat{f}(l)|_{D}\,d\,\Sigma(l))d\tau\}^{\frac{p}{2}}\,d\,\Sigma(k).
\end{split}
\end{equation}
Thus, it suffices to estimate
\begin{equation}\label{ineq}
\int_{\mathbb{Z}^{3}_{k}}\left\langle k \right\rangle^{\zeta p}\{\int_{0}^{T}(|\{\textbf{I}-\textbf{P}\}\hat{f}(\tau,k)|_{D})(\int_{\mathbb{Z}^{3}_{l}}\|\hat{f}(\tau,k-l)\|_{L^{2}_{v}}|\hat{f}(\tau,l)|_{D}\,d\,\Sigma(l))d\tau\}^{\frac{p}{2}}\,d\,\Sigma(k).
\end{equation}	
First we can apply Cauchy Inequality with respect to $ \int_{0}^{T}(\cdot)\,d\tau $ to obtain
\begin{equation*}
\begin{split}
\int_{\mathbb{Z}^{3}_{k}}\left\langle k \right\rangle^{\zeta p}&\{\int_{0}^{T}(|\{\textbf{I}-\textbf{P}\}\hat{f}(\tau,k)|_{D})(\int_{\mathbb{Z}^{3}_{l}}\|\hat{f}(\tau,k-l)\|_{L^{2}_{v}}|\hat{f}(\tau,l)|_{D}\,d\,\Sigma(l))d\tau\}^{\frac{p}{2}}\,d\,\Sigma(k)\\
\leq C\int_{\mathbb{Z}^{3}_{k}}&\{\left\langle k \right\rangle^{\frac{\zeta p}{2}}(\int_{0}^{T}|\{\textbf{I}-\textbf{P}\}\hat{f}(\tau,k)|_{D}^{2}\,d\tau)^{\frac{p}{4}}\}\\
&\times \{\left\langle k \right\rangle^{\frac{\zeta p}{2}}[\int_{0}^{T}(\int_{\mathbb{Z}^{3}_{l}}\|\hat{f}(\tau,k-l)\|_{L^{2}_{v}}|\hat{f}(\tau,l)|_{D}\,d\,\Sigma(l))^{2}\,d\tau]^{\frac{p}{4}}\}\,d\,\Sigma(k)
\end{split}
\end{equation*}
It further follows by Young's Inequality with $ \eta>0 $ that
\begin{equation}\label{ineq2}
\begin{split}
\int_{\mathbb{Z}^{3}_{k}}\left\langle k \right\rangle^{\zeta p}&\{\int_{0}^{T}(|\{\textbf{I}-\textbf{P}\}\hat{f}(\tau,k)|_{D})(\int_{\mathbb{Z}^{3}_{l}}\|\hat{f}(\tau,k-l)\|_{L^{2}_{v}}|\hat{f}(\tau,l)|_{D}\,d\,\Sigma(l))d\tau\}^{\frac{p}{2}}\,d\,\Sigma(k)\\
\leq C\eta\int_{\mathbb{Z}^{3}_{k}}&(\int_{0}^{T}|\left\langle k \right\rangle^{\zeta}\{\textbf{I}-\textbf{P}\}\hat{f}(\tau,k)|_{D}^{2}\,d\tau)^{\frac{p}{2}}\,d\,\Sigma(k)\\
+\frac{C}{4\eta}&\int_{\mathbb{Z}^{3}_{k}}\{\int_{0}^{T}(\int_{\mathbb{Z}^{3}_{l}}\left\langle k \right\rangle^{\zeta}\|\hat{f}(\tau,k-l)\|_{L^{2}_{v}}|\hat{f}(\tau,l)|_{D}\,d\,\Sigma(l) )^{2}\,d\tau\}^{\frac{p}{2}}\,d\,\Sigma(k).
\end{split}
\end{equation}	
Now we estimate the latter one of the above. Notice that 
\begin{equation}\label{ei}
\begin{split} 
\left\langle k \right\rangle^{h}\leq& \{(1+|k-l|)+(1+|l|)\}^{h}\\
\leq& 2^{h}\{(1+|k-l|)^{h}+(1+|l|)^{h}\}
=C(\left\langle k-l \right\rangle^{h}+\left\langle l \right\rangle^{h}).
\end{split}
\end{equation}
holds for any $ h\geq0 $. Applying (\ref{ei}) to the term $ \langle k \rangle^{\zeta} $, we have 
\begin{equation}\label{ineq0}
\begin{split}
&\int_{\mathbb{Z}^{3}_{k}}\{\int_{0}^{T}(\int_{\mathbb{Z}^{3}_{l}}\left\langle k \right\rangle^{\zeta}\|\hat{f}(\tau,k-l)\|_{L^{2}_{v}}|\hat{f}(\tau,l)|_{D}\,d\,\Sigma(l))^{2}\,d\tau\}^{\frac{p}{2}}\,d\,\Sigma(k)\\
\leq&C\int_{\mathbb{Z}^{3}_{k}}\{\int_{0}^{T}(\int_{\mathbb{Z}^{3}_{l}}\left\langle k-l \right\rangle^{\zeta}\|\hat{f}(\tau,k-l)\|_{L^{2}_{v}}|\hat{f}(\tau,l)|_{D}\,d\,\Sigma(l) )^{2}\,d\tau\}^{\frac{p}{2}}\,d\,\Sigma(k)\\
&+C\int_{\mathbb{Z}^{3}_{k}}\{\int_{0}^{T}(\int_{\mathbb{Z}^{3}_{l}}\left\langle l \right\rangle^{\zeta}\|\hat{f}(\tau,k-l)\|_{L^{2}_{v}}|\hat{f}(\tau,l)|_{D}\,d\,\Sigma(l) )^{2}\,d\tau\}^{\frac{p}{2}}\,d\,\Sigma(k).
\end{split}
\end{equation}
For the former term of (\ref{ineq0}), we apply Minkowski's inequality to deduce the following estimate
\begin{equation}\nonumber
\begin{split}
&\int_{\mathbb{Z}^{3}_{k}}\{\int_{0}^{T}(\int_{\mathbb{Z}^{3}_{l}}\left\langle k-l \right\rangle^{\zeta}\|\hat{f}(\tau,k-l)\|_{L^{2}_{v}}|\hat{f}(\tau,l)|_{D}\,d\,\Sigma(l) )^{2}\,d\tau\}^{\frac{p}{2}}\,d\,\Sigma(k)\\
&\leq\int_{\Z^{3}_{k}}\{\int_{\Z^{3}_{l}}(\int_{0}^{T}\langle k-l \rangle ^{2\zeta}\|\hat{f}(\tau,k-l)\|^{2}_{L^{2}_{v}}|\hat{f}(\tau,l)|^{2}_{D}\,d\tau)^{\frac{1}{2}}\,d\,\Sigma(l)\}^{p}\,d\,\Sigma(k)\\
&\leq\int_{\Z^{3}_{k}}[\int_{\Z^{3}_{l}}\langle k-l \rangle ^{\zeta}\sup_{\tau\in[0,T]}{\|\hat{f}(\tau,k-l)\|_{L^{2}_{v}}}(\int_{0}^{T}|\hat{f}(\tau,l)|^{2}_{D}\,d\tau)^{\frac{1}{2}}\,d\,\Sigma(l)]^{p}\,d\,\Sigma(k),
\end{split}
\end{equation}
which is further bounded by Minkowski's inequality as 

\begin{equation}\nonumber
\begin{split}
&=\{\int_{\Z^{3}_{k}}[\int_{\Z^{3}_{l}}\langle k-l \rangle ^{\zeta}\sup_{\tau\in[0,T]}{\|\hat{f}(\tau,k-l)\|_{L^{2}_{v}}}(\int_{0}^{T}|\hat{f}(\tau,l)|^{2}_{D}\,d\tau)^{\frac{1}{2}}\,d\,\Sigma(l)]^{p}\,d\,\Sigma(k)\}^{\frac{1}{p}\cdot p}\\
&\leq \{\int_{\Z^{3}_{l}}[\int_{\Z^{3}_{k}}\langle k-l \rangle ^{\zeta p}\sup_{\tau\in[0,T]}{\|\hat{f}(\tau,k-l)\|^{p}_{L^{2}_{v}}}(\int_{0}^{T}|\hat{f}(\tau,l)|^{2}_{D}\,d\tau)^{\frac{p}{2}}\,d\,\Sigma(k)]^{\frac{1}{p}}\,d\,\Sigma(l)\}^{p}\\
&=\|(\int_{0}^{T}|\hat{f}(\tau,l)|^{2}_{D}d\tau)^{\frac{1}{2}}\|_{l^{1}_{l}}^{p}\times\|f\|_{W_k^{\zeta,p}L^\infty_TL^2_v}^{p}.
\end{split}
\end{equation}
For the latter one of (\ref{ineq0}), similarly, we can use the same method to deduce that
\begin{equation}\nonumber
\begin{split}
&\int_{\mathbb{Z}^{3}_{k}}\{\int_{0}^{T}(\int_{\mathbb{Z}^{3}_{l}}\left\langle l \right\rangle^{\zeta}\|\hat{f}(\tau,k-l)\|_{L^{2}_{v}}|\hat{f}(\tau,l)|_{D}\,d\,\Sigma(l) )^{2}\,d\tau\}^{\frac{p}{2}}\,d\,\Sigma(k)\\
&\leq \|\sup_{\tau\in[0,T]}{\|\hat{f}(\tau,l)\|_{L^{2}_{v}}}\|_{l^{1}_{l}}^{p}\times\|f\|_{W_k^{\zeta,p}L^2_TL^2_v,D}^{p}.
\end{split}
\end{equation}
Thus, we can deduce that 
\begin{equation}\nonumber
\begin{split}
&\int_{\mathbb{Z}^{3}_{k}}\{\int_{0}^{T}(\int_{\mathbb{Z}^{3}_{l}}\left\langle k \right\rangle^{\zeta}\|\hat{f}(\tau,k-l)\|_{L^{2}_{v}}|\hat{f}(\tau,l)|_{D}\,d\,\Sigma(l))^{2}\,d\tau\}^{\frac{p}{2}}\,d\,\Sigma(k)\\
\leq& C\|(\int_{0}^{T}|\hat{f}(\tau,l)|^{2}_{D}d\tau)^{\frac{1}{2}}\|_{l^{1}_{l}}^{p}\times\|f\|_{W_k^{\zeta,p}L^\infty_TL^2_v}^{p}\\
&+C\|\sup_{\tau\in[0,T]}{\|\hat{f}(\tau,l)\|_{L^{2}_{v}}}\|_{l^{1}_{l}}^{p}\times\|f\|_{W_k^{\zeta,p}L^2_TL^2_v,D}.
\end{split}
\end{equation}

To get the anticipated result, it suffices to bound 
\begin{center}
$ \|(\int_{0}^{T}|\hat{f}(\tau,l)|^{2}_{D}d\tau)^{\frac{1}{2}}\|_{l^{1}_{l}} $ and $ \|\sup_{\tau\in[0,T]}{\|\hat{f}(\tau,l)\|_{L^{2}_{v}}}\|_{l^{1}_{l}} $.
\end{center} 
Bounding them by $ \|f\|_{W_k^{\zeta,p}L^2_TL^2_{v,D}} $ and $ \|f\|_{W_k^{\zeta,p}L^\infty_TL^2_v} $ is the goal to accomplish. 
For the first one,  
by H\"older's Inequality with $ \frac{1}{p}+\frac{1}{p'}=1 $, we can deduce that
\begin{equation}\nonumber
\begin{split}
&\int_{\Z^{3}_{l}}(\int_{0}^{T}|\hat{f}(\tau,l)|^{2}_{D}\,d\tau)^{\frac{1}{2}}\,d\,\Sigma(l)\\
&=\int_{\Z^{3}_{l}}(\int_{0}^{T}|\langle l \rangle ^{\zeta}\hat{f}(\tau,l)|^{2}_{D}d\tau)^{\frac{1}{2}}\langle l \rangle^{-\zeta}\,d\,\Sigma(l)\\
&\leq (\int_{\Z^{3}_{l}}(\int_{0}^{T}|\langle l \rangle ^{\zeta}\hat{f}(\tau,l)|^{2}_{D}\,d\tau)^{\frac{p}{2}}\,d\,\Sigma(l))^{\frac{1}{p}}(\int_{\Z^{3}_{l}}\langle l \rangle ^{-\zeta p'}\,d\,\Sigma(l))^{\frac{1}{p'}}\\
&=\|f\|_{W_k^{\zeta,p}L^2_TL^2_{v,D}}(\int_{\Z^{3}_{l}}\langle l \rangle ^{-\zeta p'}\,d\,\Sigma(l))^{\frac{1}{p'}}.
\end{split}
\end{equation}
Where $ p $ and $ p' $ are conjugate, say, $ p'=\frac{p}{p-1} $.

Since $ \zeta>3(1-\frac{1}{p}) $, we have $ \zeta p'=\frac{\zeta p}{p-1}>3 $. Thus $ \int_{\Z^{3}_{l}}\langle l \rangle ^{-\zeta p'}\,d\,\Sigma(l) $ is finite, which implies that $ \|(\int_{0}^{T}|\hat{f}(\tau,l)|^{2}_{D}\,d\tau)^{\frac{1}{2}}\|_{l^{1}_{l}}\leq C\|f\|_{W_k^{\zeta,p}L^2_TL^2_v,D} $.

For the latter one, we can similarly deduce that
\begin{equation} \nonumber
\begin{split}
&\int_{\Z^{3}_{l}}\sup_{\tau\in[0,T]}{\|\hat{f}(\tau,l)\|_{L^{2}_{v}}}\,d\,\Sigma(l)\\
&=\int_{\Z^{3}_{l}}\sup_{\tau\in[0,T]}{\|\langle l \rangle ^{\zeta}\hat{f}(\tau,l)\|_{L^{2}_{v}}}\langle l \rangle ^{-\zeta}\,d\,\Sigma(l)\\
&\leq (\int_{\Z^{3}_{l}}\sup_{\tau\in[0,T]}{\|\langle l \rangle ^{\zeta}\hat{f}(\tau,l)\|^{p}_{L^{2}_{v}}}\,d\,\Sigma(l))^{\frac{1}{p}}(\int_{\Z^{3}_{l}}\langle l \rangle ^{-\zeta p'}\,d\,\Sigma(l))^{\frac{1}{p'}}\\
&\leq C\|f\|_{W_k^{\zeta,p}L^\infty_TL^2_v}.
\end{split}
\end{equation}
Thus we can deduce that 
\begin{equation}\label{ineq3}
\begin{split}
&\int_{\mathbb{Z}^{3}_{k}}\{\int_{0}^{T}(\int_{\mathbb{Z}^{3}_{l}}\left\langle k \right\rangle^{\zeta}\|\hat{f}(\tau,k-l)\|_{L^{2}_{v}}|\hat{f}(\tau,l)|_{D}\,d\,\Sigma(l) )^{2}\,d\tau\}^{\frac{p}{2}}\,d\,\Sigma(k)\\
&\leq C\|f\|_{W_k^{\zeta,p}L^\infty_TL^2_v}^{p}\|f\|_{W_k^{\zeta,p}L^2_TL^2_v,D}^{p}.
\end{split}
\end{equation}
Combining (\ref{ineq1}), (\ref{ineq5}), (\ref{ineq2}) and (\ref{ineq3}), we have
\begin{equation}\nonumber
\begin{split}
&\|f\|_{W^{\zeta,p}_{k}L^{\infty}_{T}L^{2}_{v}}^{p}+\|\{\textbf{I}-\textbf{P}\}f\|^{p}_{W^{\zeta,p}_{k}L^{2}_{T}L^{2}_{v,D}}\\
&\lesssim\|f_{0}\|_{W^{\zeta,p}_{k}L^{2}_{v}}^{p}+C_{\eta}\|f\|^{p}_{W^{\zeta,p}_{k}L^{\infty}_{T}L^{2}_{v}}\|f\|^{p}_{W^{\zeta,p}_{k}L^{2}_{T}L^{2}_{v,D}}+\eta\|\{\textbf{I}-\textbf{P}\}f\|^{p}_{W^{\zeta,p}_{k}L^{2}_{T}L^{2}_{v,D}}.
\end{split}
\end{equation}
Noticing that there exists a constant $ C>0 $ such that for any $ D,E,F\geq0 $, we have
\begin{equation}\nonumber
\frac{1}{C}(D+E+F)\leq(D^{p}+E^{p}+F^{p})^{\frac{1}{p}}\leq C(D+E+F).
\end{equation}
So we can finally deduce
\begin{equation}\nonumber
\begin{split}
&\|f\|_{W^{\zeta,p}_{k}L^{\infty}_{T}L^{2}_{v}}+\|\{\textbf{I}-\textbf{P}\}f\|_{W^{\zeta,p}_{k}L^{2}_{T}L^{2}_{v,D}}\\
&\lesssim\|f_{0}\|_{W^{\zeta,p}_{k}L^{2}_{v}}+C_{\eta}\|f\|_{W^{\zeta,p}_{k}L^{\infty}_{T}L^{2}_{v}}\|f\|_{W^{\zeta,p}_{k}L^{2}_{T}L^{2}_{v,D}}+\eta\|\{\textbf{I}-\textbf{P}\}f\|_{W^{\zeta,p}_{k}L^{2}_{T}L^{2}_{v,D}}.
\end{split}
\end{equation}
This proves \eqref{lemma1.p1} and completes the proof of Lemma \ref{lemma1}.
\end{proof}

To proceed, it is necessary to obtain the macroscopic estimate. Applying the results in \cite{DLSS}, we will obtain the macroscopic estimate in the following lemma.

\begin{lemma}\label{lemma2}
	Assume all the assumptions of Theorem {\rm\ref{theorem1}} hold true. It holds that 
	\begin{equation}\label{me}
	\begin{split}
	\|[a,b,c]\|_{W^{\zeta,p}_{k}L^{2}_{T}}\lesssim& \|\{\FI-\FP\}f\|_{W^{\zeta,p}_{k}L^{2}_{T}L^{2}_{v,D}}+\|f\|_{W^{\zeta,p}_{k}L^{\infty}_{T}L^{2}_{v}}+\|f_{0}\|_{W^{\zeta,p}_{k}L^{2}_{v}}\\
	&+(\int_{\mathbb{Z}^{3}_{k}}\left\langle k \right\rangle^{\zeta p}(\int_{0}^{T}|(\hat{\Gamma}(\hat{f},\hat{f}),\mu^{\frac{1}{4}})_{L^{2}_{v}}|^{2}\,d\tau)^{\frac{p}{2}}\,d\,\Sigma(k))^{\frac{1}{p}}.
	\end{split}
	\end{equation}
\end{lemma}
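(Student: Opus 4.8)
The plan is to mimic the classical macroscopic (local conservation law) estimate of Guo, adapted to the Fourier-side $W^{\zeta,p}_k$ framework already used in \cite{DLSS}. First I would take the Fourier transform in $x$ of equation \eqref{LBeq} and test against the macroscopic basis functions $\mu^{1/2}, v_i\mu^{1/2}, (|v|^2-3)\mu^{1/2}$ (equivalently, project onto $\FP$). This produces a first-order ODE--PDE system in $t$ and $k$ for the coefficients $\widehat{[a,b,c]}(t,k)$, whose right-hand side contains three kinds of terms: (i) transport terms $ik\cdot(\cdots)$ coupling $[a,b,c]$ to itself and to the microscopic part $\{\FI-\FP\}\hat f$, (ii) contributions of $L\hat f$, which vanish on $\FP$ and therefore only involve $\{\FI-\FP\}\hat f$ paired against velocity-weighted Gaussians and are thus controlled by $|\{\FI-\FP\}\hat f|_D$, and (iii) the nonlinear term $(\hat\Gamma(\hat f,\hat f),\mu^{1/4})_{L^2_v}$-type quantities, which is exactly the last term appearing on the right of \eqref{me}.

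The core of the argument is the standard elliptic-type gain: from the local conservation laws one derives that suitable spatial (here: Fourier-multiplier $\langle k\rangle$) combinations of $[a,b,c]$ are controlled by time derivatives of lower-order quantities plus the microscopic and nonlinear contributions. Concretely, following the $\alpha$-$\beta$ test-function machinery (the ``$\partial_i\Phi$'' multipliers of Guo, as implemented in \cite{DLSS}), one writes $|k|\,|\widehat{[a,b,c]}|^2 \lesssim \frac{d}{dt}(\text{something bounded by }\|\hat f\|_{L^2_v}^2) + |\{\FI-\FP\}\hat f|_D^2 + |(\hat\Gamma,\mu^{1/4})|^2$. I would then integrate in $t\in[0,T]$: the full-derivative term telescopes and is bounded by $\sup_t\|\hat f(t,k)\|_{L^2_v}^2$ (hence by $\|\hat f_0(k)\|_{L^2_v}^2$ plus $\|f\|^2_{W^{\zeta,p}_kL^\infty_TL^2_v}$ after using the equation once more, as in the proof of Lemma~\ref{lemma1}), while the other two terms integrate to $\int_0^T|\{\FI-\FP\}\hat f(\tau,k)|_D^2\,d\tau$ and $\int_0^T|(\hat\Gamma,\mu^{1/4})|^2\,d\tau$. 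This yields a pointwise-in-$k$ inequality
\[
\langle k\rangle^{2\zeta}\!\int_0^T|\widehat{[a,b,c]}(\tau,k)|^2\,d\tau \lesssim \langle k\rangle^{2\zeta}\Big(\|\hat f_0(k)\|_{L^2_v}^2+\sup_{t}\|\hat f(t,k)\|_{L^2_v}^2+\!\int_0^T|\{\FI-\FP\}\hat f|_D^2\,d\tau+\!\int_0^T|(\hat\Gamma,\mu^{1/4})|^2\,d\tau\Big).
\]

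To pass from this to the stated $W^{\zeta,p}_k$-norm bound \eqref{me}, I would raise both sides to the power $p/2$, integrate $d\,\Sigma(k)$ over $\Z^3$, and use the elementary inequality $(A_1+A_2+A_3+A_4)^{p/2}\lesssim A_1^{p/2}+\cdots+A_4^{p/2}$ (valid for $p\geq 1$) to split the four contributions, recognizing each as the $p$-th power of one of the four norms on the right-hand side of \eqref{me}; finally take $p$-th roots and use $(D^p+E^p+F^p+G^p)^{1/p}\simeq D+E+F+G$ exactly as at the end of the proof of Lemma~\ref{lemma1}. The main obstacle I anticipate is \emph{bookkeeping rather than conceptual}: one must carefully check that the test-function construction for the macroscopic estimate in \cite{DLSS} (done there in the Wiener-algebra / $\ell^1_k$ setting) carries over verbatim once the weight $\langle k\rangle^\zeta$ is inserted, i.e.\ that no additional convolution in $k$ is generated by the linear macroscopic machinery (it is not, since $L$ and transport act diagonally in $k$), so that the only place the $\ell^1$-type structure and the condition $\zeta>3(1-1/p)$ were needed — the nonlinear term — has already been isolated into the last term of \eqref{me} and handled exactly as in Lemma~\ref{lemma1}. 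A secondary technical point is ensuring the full-time-derivative terms are genuinely bounded by $L^2_v$-norms of $\hat f$ uniformly in $k$ (no $\langle k\rangle$ loss), which follows because the macroscopic projection involves only fixed Gaussian velocity weights.
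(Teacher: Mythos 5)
Your proposal follows essentially the same route as the paper: the paper simply cites the pointwise-in-$k$ macroscopic estimates from the proof of Theorem 5.1 in \cite{DLSS} (the Guo-type test-function machinery you describe), absorbs the cross terms in $\hat a,\hat b,\hat c$ with small parameters, and then lifts to the $W^{\zeta,p}_{k}$ norm exactly as you do — multiplying by $\langle k\rangle^{\zeta p}$, raising to the power $p$, integrating in $k$, and using the elementary equivalence of $\ell^p$-sums with sums. Your observation that the linear macroscopic machinery acts diagonally in $k$, so only the nonlinear term needs the $\ell^1$/convolution structure, matches the paper's treatment.
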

\begin{proof}
 From the proof of \cite[Theorem 5.1]{DLSS}, for arbitrarily small $ \eta_{i}>0(i=1,2) $, we can deduce that
 \begin{equation}\nonumber
	\begin{split}
	\int_{0}^{t}|\hat{c}(\tau,k)|^{2}\,d\tau\lesssim& \|\hat{f}(k,t)\|^{2}_{L^{2}_{v}}+\|\hat{f}_{0}(k)\|^{2}_{L^{2}_{v}}+\eta_{1}\int_{0}^{t}|\hat{b}(\tau,k)|^{2}d\tau\\
	&+C_{\eta_{1}}\int_{0}^{t}|\{\FI-\FP\}\hat{f}(\tau,k)|_{D}^{2}\,d\tau+C_{\eta_{1}}\int_{0}^{t}|(\hat{\Gamma}(\hat{f},\hat{f}),\mu^{\frac{1}{4}})_{L^{2}_{v}}|^{2}\,d\tau;\\
	\int_{0}^{t}|\hat{b}(\tau,k)|^{2}\,d\tau\lesssim&\|\hat{f}(k,t)\|^{2}_{L^{2}_{v}}+\|\hat{f}_{0}(k)\|^{2}_{L^{2}_{v}}+\eta_{2}\int_{0}^{t}|\hat{a}(\tau,k)|^{2}\,d\tau\\
	&+C_{\eta_{2}}\int_{0}^{t}|\hat{c}(\tau,k)|^{2}\,d\tau+C_{\eta_{2}}\int_{0}^{t}|\{\FI-\FP\}\hat{f}(\tau,k)|_{D}^{2}\,d\tau\\
	&+C_{\eta_{2}}\int_{0}^{t}|(\hat{\Gamma}(\hat{f},\hat{f}),\mu^{\frac{1}{4}})_{L^{2}_{v}}|^{2}\,d\tau;\\
	\int_{0}^{t}|\hat{a}(\tau,k)|^{2}\,d\tau\lesssim&\|\hat{f}(k,t)\|^{2}_{L^{2}_{v}}+\|\hat{f}_{0}(k)\|^{2}_{L^{2}_{v}}+\int_{0}^{t}|\hat{b}(\tau,k)|^{2}\,d\tau\\
	&+\int_{0}^{t}|\{\FI-\FP\}\hat{f}(\tau,k)|_{D}^{2}\,d\tau +\int_{0}^{t}|(\hat{\Gamma}(\hat{f},\hat{f}),\mu^{\frac{1}{4}})_{L^{2}_{v}}|^{2}\,d\tau.
	\end{split}
\end{equation} 
	Taking $ \mu_{1} $ and $ \mu_{2} $ to be small enough, we have
	\begin{equation}\nonumber
	\begin{split}
	\int_{0}^{t}|[a,b,c]|^{2}\,d\tau\lesssim& \int_{0}^{t}|\{\FI-\FP\}\hat{f}(\tau,k)|_{D}^{2}\,d\tau+\|\hat{f}(k,t)\|^{2}_{L^{2}_{v}}+\|\hat{f}_{0}(k)\|^{2}_{L^{2}_{v}}\\
	&+\int_{0}^{t}|(\hat{\Gamma}(\hat{f},\hat{f}),\mu^{\frac{1}{4}})_{L^{2}_{v}}|^{2}\,d\tau.
	\end{split}
	\end{equation}
	Taking supreme of $ t $ over $ [0,T] $ on both sides, we further have
	\begin{equation}\nonumber
	\begin{split}
	\|[a,b,c]\|_{L^{2}_{T}}\lesssim&\|\{\FI-\FP\}\hat{f}\|_{L^{2}_{T}L^{2}_{v,D}}+\sup_{t\in[0,T]}\|\hat{f}(k,t)\|_{L^{2}_{v}}+\|\hat{f}_{0}(k)\|_{L^{2}_{v}}\\
	&+(\int_{0}^{T}|(\hat{\Gamma}(\hat{f},\hat{f}),\mu^{\frac{1}{4}})_{L^{2}_{v}}|^{2}\,d\tau)^{\frac{1}{2}}.
	\end{split}
	\end{equation}
	Then we know that
	\begin{equation} \nonumber
	\begin{split}
	\|[a,b,c]\|_{L^{2}_{T}}^{p}\lesssim&\|\{\FI-\FP\}\hat{f}\|_{L^{2}_{T}L^{2}_{v,D}}^{p}+\sup_{t\in[0,T]}\|\hat{f}(k,t)\|_{L^{2}_{v}}^{p}+\|\hat{f}_{0}(k)\|_{L^{2}_{v}}^{p}\\
	&+(\int_{0}^{T}|(\hat{\Gamma}(\hat{f},\hat{f}),\mu^{\frac{1}{4}})_{L^{2}_{v}}|^{2}\,d\tau)^{\frac{p}{2}}.
	\end{split}
	\end{equation} 
	Thus taking product with $ \langle k \rangle^{\zeta p} $ and integrating with respect to $ k $, we can deduce
	\begin{equation}\nonumber
	\begin{split}
	\|[a,b,c]\|_{W^{\zeta,p}_{k}L^{2}_{T}}\lesssim& \|\{\FI-\FP\}f\|_{W^{\zeta,p}_{k}L^{2}_{T}L^{2}_{v,D}}+\|f\|_{W^{\zeta,p}_{k}L^{\infty}_{T}L^{2}_{v}}+\|f_{0}\|_{W^{\zeta,p}_{k}L^{2}_{v}}\\
	&+(\int_{\mathbb{Z}^{3}_{k}}\left\langle k \right\rangle^{\zeta p}(\int_{0}^{T}|(\hat{\Gamma}(\hat{f},\hat{f}),\mu^{\frac{1}{4}})_{L^{2}_{v}}|^{2}\,d\tau)^{\frac{p}{2}}\,d\,\Sigma(k))^{\frac{1}{p}}.
	\end{split}
	\end{equation}
This then completes the proof of Lemma \ref{lemma2}.
\end{proof}

To estimate the last term of inequality (\ref{me}), we further prove the following lemma.

\begin{lemma}\label{lemma3} 
Let $ \gamma+2s\geq 0 $. Assume that all the assumptions of Theorem {\rm\ref{theorem1}} hold true. $ u $ only depends on $ v $ and $ u $ decays rapidly at infinity. The following estimate holds:
	\begin{equation}\label{lemma3.p1}
	\begin{split}
	&(\int_{\mathbb{Z}^{3}_{k}}\left\langle k \right\rangle^{\zeta p}(\int_{0}^{T}|(\hat{\Gamma}(\hat{f},\hat{g}),u(v))_{L^{2}_{v}}|^{2}\,d\tau)^{\frac{p}{2}}\,d\,\Sigma(k))^{\frac{1}{p}}\\
	&\lesssim\|f\|_{W^{\zeta,p}_{k}L^{\infty}_{T}L^{2}_{v}}\|g\|_{W^{\zeta,p}_{k}L^{2}_{T}L^{2}_{v,D}}+\|g\|_{W^{\zeta,p}_{k}L^{\infty}_{T}L^{2}_{v}}\|f\|_{W^{\zeta,p}_{k}L^{2}_{T}L^{2}_{v,D}}.
	\end{split}
	\end{equation}
	The constant only depends on $ u(v) $.
\end{lemma}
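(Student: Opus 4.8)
The plan is to estimate the bilinear functional $(\hat\Gamma(\hat f,\hat g),u(v))_{L^2_v}$ pointwise in $k$ and $\tau$ using a trilinear-type bound analogous to \cite[Lemma 3.2]{DLSS}, but with the third slot occupied by the fixed rapidly-decaying function $u(v)$ rather than a general $\hat h$. Since $u$ is smooth and decays fast, $|u|_D$ and all weighted $L^2_v$ norms of $u$ are finite constants, so the estimate should read
\begin{equation*}
|(\hat\Gamma(\hat f,\hat g)(k),u)_{L^2_v}|\leq C_u\int_{\Z^3_l}\|\hat f(k-l)\|_{L^2_v}\,|\hat g(l)|_D\,d\,\Sigma(l),
\end{equation*}
and, by the symmetry $\Gamma(f,g)\leftrightarrow\Gamma(g,f)$ built into the collision operator together with the freedom to move the $D$-norm onto either factor (again using that the third slot is the harmless function $u$), also the companion bound with the roles of $f$ and $g$ exchanged. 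I would first record this pointwise inequality, citing the non-cutoff upper bound for $\Gamma$ from \cite{DLSS} and observing that the constant depends only on $u$ through finitely many seminorms; the hard-potential hypothesis $\gamma+2s\geq 0$ is what guarantees these $u$-seminorms are finite without extra velocity weights on $f,g$.

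Next I would insert this pointwise bound into the left-hand side of \eqref{lemma3.p1} and follow exactly the machinery already developed in the proof of Lemma \ref{lemma1}. Concretely: raise to the power $p/2$ after integrating in $\tau$, use the elementary inequality $\langle k\rangle^\zeta\leq C(\langle k-l\rangle^\zeta+\langle l\rangle^\zeta)$ from \eqref{ei} to split into two terms, and in each term apply Minkowski's inequality (to pull the $\Sigma(l)$-integration outside the $\Sigma(k)$-integration raised to the power $p$) together with the Cauchy--Schwarz separation of $\sup_\tau\|\hat f\|_{L^2_v}$ from $(\int_0^T|\hat g|_D^2\,d\tau)^{1/2}$. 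This produces the structure
\begin{equation*}
\bigl\|(\smallint_0^T|\hat g(\tau,l)|_D^2\,d\tau)^{1/2}\bigr\|_{\ell^1_l}^{p}\cdot\|f\|_{W^{\zeta,p}_kL^\infty_TL^2_v}^{p}+\bigl\|\sup_{\tau}\|\hat f(\tau,l)\|_{L^2_v}\bigr\|_{\ell^1_l}^{p}\cdot\|g\|_{W^{\zeta,p}_kL^2_TL^2_{v,D}}^{p},
\end{equation*}
after which one uses the companion bound to symmetrize in $f$ and $g$.

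The final step is the $\ell^1_l$-to-$\ell^p_l$ passage, which is where the assumption $\zeta>3(1-\frac1p)$ is used, exactly as in Lemma \ref{lemma1}: by Hölder with conjugate exponent $p'=p/(p-1)$, $\|(\int_0^T|\hat g(\tau,l)|_D^2\,d\tau)^{1/2}\|_{\ell^1_l}\leq\|g\|_{W^{\zeta,p}_kL^2_TL^2_{v,D}}\,(\int_{\Z^3_l}\langle l\rangle^{-\zeta p'}\,d\,\Sigma(l))^{1/p'}$, and $\zeta p'>3$ makes the summation convergent; similarly for the $\sup_\tau\|\hat f\|_{L^2_v}$ factor. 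Taking $p$-th roots gives \eqref{lemma3.p1}.

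I expect the only genuine obstacle to be verifying the pointwise trilinear estimate with $u$ in the last slot, i.e.\ checking that the argument of \cite[Lemma 3.2]{DLSS} indeed yields a bound of the stated shape where the $u$-dependence is through finite quantities only (this is where $\gamma+2s\geq0$ matters, since it removes the need for an exponential velocity weight); everything downstream is a verbatim replay of the $\ell^p$/Minkowski/Hölder bookkeeping already carried out in Lemma \ref{lemma1}, and should be routine.
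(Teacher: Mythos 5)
Your proposal matches the paper's proof essentially step for step: the paper likewise invokes the trilinear upper bound from \cite{DLSS} (its Lemma 4.1) with the fixed function $u$ in the last slot, so that the $u$-dependence enters only through the finite constant $|u|_D$, and then replays the Minkowski/H\"older/$\ell^1$-to-$\ell^p$ bookkeeping of Lemma \ref{lemma1} verbatim, using $\zeta p'>3$ for the convergence of $\sum_l\langle l\rangle^{-\zeta p'}$. The only cosmetic difference is that the cited DLSS estimate already delivers both terms $\|\hat f\|_{L^2_v}|\hat g|_D+\|\hat g\|_{L^2_v}|\hat f|_D$ at once, so no separate ``symmetry'' argument is needed to obtain the companion bound.
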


\begin{proof}
	Notice that
	\begin{equation}\nonumber
	\begin{split}
	&(\int_{0}^{T}|(\hat{\Gamma}(\hat{f},\hat{g}),u(v))_{L^{2}_{v}}|^{2}\,d\tau)^{\frac{p}{2}}\\
	&=\{\int_{0}^{T}[\int_{\mathbb{R}^{3}_{v}}(\int_{\mathbb{Z}^{3}_{l}}\Gamma(\hat{f}(\tau,k-l),\hat{g}(\tau,l))u(v)\,d\,\Sigma(l))dv]^{2}\,d\tau\}^{\frac{p}{2}}.
	\end{split}
	\end{equation}
	Recall \cite[Lemma 4.1]{DLSS}. We have
	\begin{equation}\nonumber
	\begin{split}
	&(\Gamma(\hat{f}(\tau,k-l),\hat{g}(\tau,l)),u(v))_{L^{2}_{v}}\\
	&\lesssim(\|\hat{f}(\tau,k-l)\|_{L^{2}_{v}}|\hat{g}(\tau,l)|_{D}+\|\hat{g}(\tau,l)\|_{L^{2}_{v}}|\hat{f}(\tau,k-l)|_{D})|u|_{D}.
	\end{split}
	\end{equation}
	Applying the above inequality, we can deduce that
	\begin{equation}\label{ineq4}
	\begin{split}
	&\{\int_{0}^{T}[\int_{\mathbb{R}^{3}_{v}}(\int_{\mathbb{Z}^{3}_{l}}\Gamma(\hat{f}(\tau,k-l),\hat{g}(\tau,l))u(v)\,d\,\Sigma(l))\,dv]^{2}\,d\tau\}^{\frac{p}{2}}\\
	\leq&C\{\int_{0}^{T}(\int_{\mathbb{Z}^{3}_{l}}\|\hat{f}(\tau,k-l)\|_{L^{2}_{v}}|\hat{g}(\tau,l)|_{D}\,d\,\Sigma(l))^{2}\,d\tau\}^{\frac{p}{2}}\\
	&+C\{\int_{0}^{T}(\int_{\mathbb{Z}^{3}_{l}}\|\hat{g}(\tau,l)\|_{L^{2}_{v}}|\hat{f}(\tau,k-l)|_{D}\,d\,\Sigma(l))^{2}\,d\tau\}^{\frac{p}{2}}.
	\end{split}
	\end{equation}
	For the first term of (\ref{ineq4}), we then apply Minkowski's inequality to deduce that
	\begin{equation}\nonumber
	\begin{split}
	&\{\int_{0}^{T}(\int_{\mathbb{Z}^{3}_{l}}\|\hat{f}(\tau,k-l)\|_{L^{2}_{v}}|\hat{g}(\tau,l)|_{D}\,d\,\Sigma(l))^{2}\,d\tau\}^{\frac{p}{2}}\\
	&\leq\{\int_{\mathbb{Z}^{3}_{l}}(\int_{0}^{T}\|\hat{f}(\tau,k-l)\|_{L^{2}_{v}}^{2}|\hat{g}(\tau,l)|^{2}_{D}\,d\tau)^{\frac{1}{2}}\,d\,\Sigma(l)\}^{p}\\
	&\leq \{\int_{\mathbb{Z}^{3}_{l}}(\sup_{\tau\in[0,T]}\|\hat{f}(\tau,k-l)\|_{L^{2}_{v}})(\int_{0}^{T}|\hat{g}(\tau,l)|_{D}^{2}\,d\tau)^{\frac{1}{2}}\,d\,\Sigma(l)\}^{p}.
	\end{split}
	\end{equation}
	Applying the same method to the second term, we finally can deduce that
	\begin{equation}\nonumber
	\begin{split}
	&\{\int_{0}^{T}[\int_{\mathbb{R}^{3}_{v}}(\int_{\mathbb{Z}^{3}_{l}}\Gamma(\hat{f}(\tau,k-l),\hat{g}(\tau,l))u(v)\,d\,\Sigma(l))dv]^{2}\,d\tau\}^{\frac{p}{2}}\\
	\leq&C\{\int_{\mathbb{Z}^{3}_{l}}(\sup_{\tau\in[0,T]}\|\hat{f}(\tau,k-l)\|_{L^{2}_{v}})(\int_{0}^{T}|\hat{g}(\tau,l)|_{D}^{2}\,d\tau)^{\frac{1}{2}}\,d\,\Sigma(l)\}^{p}\\
	&+C\{\int_{\mathbb{Z}^{3}_{l}}(\sup_{\tau\in[0,T]}\|\hat{g}(\tau,l)\|_{L^{2}_{v}})(\int_{0}^{T}|\hat{f}(\tau,k-l)|_{D}^{2}\,d\tau)^{\frac{1}{2}}\,d\,\Sigma(l)\}^{p}.
	\end{split}
	\end{equation}
	Here, the constant $ C $ above only depends on the function $ u(v) $. Thus we have 
	\begin{equation}\nonumber
	\begin{split}
	&\int_{\mathbb{Z}^{3}_{k}}\left\langle k \right\rangle^{\zeta p}(\int_{0}^{T}|(\hat{\Gamma}(\hat{f},\hat{g}),u(v))_{L^{2}_{v}}|^{2}\,d\tau)^{\frac{p}{2}}\,d\,\Sigma(k)\\
	\leq&C\{\int_{\mathbb{Z}^{3}_{k}}[\int_{\mathbb{Z}^{3}_{l}}\left\langle k \right\rangle^{\zeta}(\sup_{\tau\in[0,T]}\|\hat{f}(\tau,k-l)\|_{L^{2}_{v}})(\int_{0}^{T}|\hat{g}(\tau,l)|_{D}^{2}\,d\tau)^{\frac{1}{2}}\,d\,\Sigma(l)]^{p}\,d\,\Sigma(k)\}^{\frac{1}{p}\cdot p}\\
	&+C\{\int_{\mathbb{Z}^{3}_{k}}[\int_{\mathbb{Z}^{3}_{l}}\left\langle k \right\rangle^{\zeta}(\sup_{\tau\in[0,T]}\|\hat{g}(\tau,l)\|_{L^{2}_{v}})(\int_{0}^{T}|\hat{f}(\tau,k-l)|_{D}^{2}\,d\tau)^{\frac{1}{2}}\,d\,\Sigma(l)]^{p}\,d\,\Sigma(k)\}^{\frac{1}{p}\cdot p}.
	\end{split}
	\end{equation}
	For the former term above, we invoke the inequality $ \langle k \rangle^{h}\leq C(\langle k-l \rangle^{h}+\langle l \rangle^{h}) $ and Minkowski's Inequality again to deduce that
	\begin{equation}\nonumber
	\begin{split}
	&\{\int_{\mathbb{Z}^{3}_{k}}[\int_{\mathbb{Z}^{3}_{l}}\left\langle k \right\rangle^{\zeta}(\sup_{\tau\in[0,T]}\|\hat{f}(\tau,k-l)\|_{L^{2}_{v}})(\int_{0}^{T}|\hat{g}(\tau,l)|_{D}^{2}\,d\tau)^{\frac{1}{2}}\,d\,\Sigma(l)]^{p}\,d\,\Sigma(k)\}^{\frac{1}{p}\cdot p}\\
	&\leq C\{\int_{\mathbb{Z}^{3}_{k}}[\int_{\mathbb{Z}^{3}_{l}}\left\langle k-l \right\rangle^{\zeta}(\sup_{\tau\in[0,T]}\|\hat{f}(\tau,k-l)\|_{L^{2}_{v}})(\int_{0}^{T}|\hat{g}(\tau,l)|_{D}^{2}\,d\tau)^{\frac{1}{2}}\,d\,\Sigma(l)]^{p}\,d\,\Sigma(k)\}^{\frac{1}{p}\cdot p}\\
	&+C\{\int_{\mathbb{Z}^{3}_{k}}[\int_{\mathbb{Z}^{3}_{l}}\left\langle l \right\rangle^{\zeta}(\sup_{\tau\in[0,T]}\|\hat{f}(\tau,k-l)\|_{L^{2}_{v}})(\int_{0}^{T}|\hat{g}(\tau,l)|_{D}^{2}\,d\tau)^{\frac{1}{2}}\,d\,\Sigma(l)]^{p}\,d\,\Sigma(k)\}^{\frac{1}{p}\cdot p}\\
	&\leq C\{\int_{\mathbb{Z}^{3}_{l}}[\int_{\mathbb{Z}^{3}_{k}}\left\langle k-l \right\rangle^{\zeta p}(\sup_{\tau\in[0,T]}\|\hat{f}(\tau,k-l)\|_{L^{2}_{v}})^{p}(\int_{0}^{T}|\hat{g}(\tau,l)|_{D}^{2}\,d\tau)^{\frac{p}{2}}
	\,d\,\Sigma(k)]^{\frac{1}{p}}\,d\,\Sigma(l)\}^{p}\\
	&+C\{\int_{\mathbb{Z}^{3}_{l}}[\int_{\mathbb{Z}^{3}_{k}}(\sup_{\tau\in[0,T]}\|\hat{f}(\tau,l)\|_{L^{2}_{v}})^{p}(\int_{0}^{T}|\left\langle k-l \right\rangle^{\zeta}\hat{g}(\tau,k-l)|_{D}^{2}\,d\tau)^{\frac{p}{2}}\,d\,\Sigma(k)]^{\frac{1}{p}}\,d\,\Sigma(l)\}^{p}\\
	&=C\|f\|_{W_{k}^{\zeta,p}L^{\infty}_{T}L^{2}_{v}}^{p}\|(\int_{0}^{T}|\hat{g}(\tau,l)|_{D}^{2}\,d\tau)^{\frac{1}{2}}\|_{l^{1}_{l}}^{p}+C\|\sup_{\tau\in[0,T]}\|\hat{f}(\tau,l)\|_{L^{2}_{v}}\|^{p}_{l^{1}_{l}}\|g\|_{W_k^{\zeta,p}L^2_TL^2_v,D}^{p}\\
	&\leq C\|f\|_{W_{k}^{\zeta,p}L^{\infty}_{T}L^{2}_{v}}^{p}\|g\|_{W_k^{\zeta,p}L^2_TL^2_v,D}^{p}.
	\end{split}
	\end{equation}
	Here, we applied the inequalities
	\begin{gather*}
	\|(\int_{0}^{T}|\hat{g}(\tau,l)|_{D}^{2}\,d\tau)^{\frac{1}{2}}\|_{l^{1}_{l}}\leq \|g\|_{W_k^{\zeta,p}L^2_TL^2_v,D}
	\end{gather*}
	and
	\begin{gather*}
	\|\sup_{\tau\in[0,T]}\|\hat{f}(\tau,l)\|_{L^{2}_{v}}\|_{l^{1}_{l}}\leq \|f\|_{W_{k}^{\zeta,p}L^{\infty}_{T}L^{2}_{v}},
	\end{gather*}
	which have been proved in Lemma \ref{lemma1}. Similarly, for the latter one, we can deduce that
	\begin{equation}\nonumber
	\begin{split}
	&\{\int_{\mathbb{Z}^{3}_{k}}[\int_{\mathbb{Z}^{3}_{l}}\left\langle k \right\rangle^{\zeta}(\sup_{\tau\in[0,T]}\|\hat{g}(\tau,l)\|_{L^{2}_{v}})(\int_{0}^{T}|\hat{f}(\tau,k-l)|_{D}^{2}\,d\tau)^{\frac{1}{2}}\,d\,\Sigma(l)]^{p}\,d\,\Sigma(k)\}^{\frac{1}{p}\cdot p}\\
	&\leq C\|g\|_{W_{k}^{\zeta,p}L^{\infty}_{T}L^{2}_{v}}^{p}\|f\|_{W_k^{\zeta,p}L^2_TL^2_v,D}^{p}.
	\end{split}
	\end{equation}
	Thus we have
	\begin{equation}\nonumber
	\begin{split}
	&\int_{\mathbb{Z}^{3}_{k}}\left\langle k \right\rangle^{\zeta p}(\int_{0}^{T}|(\hat{\Gamma}(\hat{f},\hat{g}),u(v))_{L^{2}_{v}}|^{2}\,d\tau)^{\frac{p}{2}}\,d\,\Sigma(k)\\
	&\lesssim\|f\|_{W^{\zeta,p}_{k}L^{\infty}_{T}L^{2}_{v}}^{p}\|g\|_{W^{\zeta,p}_{k}L^{2}_{T}L^{2}_{v,D}}^{p}+\|g\|_{W^{\zeta,p}_{k}L^{\infty}_{T}L^{2}_{v}}^{p}\|f\|_{W^{\zeta,p}_{k}L^{2}_{T}L^{2}_{v,D}}^{p}.
	\end{split}
	\end{equation}
	Notice that there exists a constant $ C>0 $ such that it holds for any $ D,E\geq0 $ that
	\begin{equation}\nonumber
	\frac{1}{C}(D+E)\leq(D^{p}+E^{p})^{\frac{1}{p}}\leq C(D+E).
	\end{equation}
	Applying the above inequality, we finally deduce that
	\begin{equation}\nonumber
	\begin{split}
	&(\int_{\mathbb{Z}^{3}_{k}}\left\langle k \right\rangle^{\zeta p}(\int_{0}^{T}|(\hat{\Gamma}(\hat{f},\hat{g}),u(v))_{L^{2}_{v}}|^{2}\,d\tau)^{\frac{p}{2}}\,d\,\Sigma(k))^{\frac{1}{p}}\\
	&\lesssim\|f\|_{W^{\zeta,p}_{k}L^{\infty}_{T}L^{2}_{v}}\|g\|_{W^{\zeta,p}_{k}L^{2}_{T}L^{2}_{v,D}}+\|g\|_{W^{\zeta,p}_{k}L^{\infty}_{T}L^{2}_{v}}\|f\|_{W^{\zeta,p}_{k}L^{2}_{T}L^{2}_{v,D}}.
	\end{split} 
	\end{equation}
This proves \eqref{lemma3.p1} and completes the proof of Lemma \ref{lemma3}.
\end{proof}

Now we are able to prove the main estimate for the hard potential case. 
\begin{lemma}\label{lemma6}
	Let $ \gamma+2s\geq0 $. Assume all the assumptions of Theorem {\rm\ref{theorem1}} hold true. It holds that
	\begin{equation}\label{lemma6.p1}
	\begin{split}
	&\|f\|_{W^{\zeta,p}_{k}L^{\infty}_{T}L^{2}_{v}}+\|f\|_{W^{\zeta,p}_{k}L^{2}_{T}L^{2}_{v,D}}\\
	&\lesssim\|f_{0}\|_{W^{\zeta,p}_{k}L^{2}_{v}}+\|f\|_{W^{\zeta,p}_{k}L^{\infty}_{T}L^{2}_{v}}\|f\|_{W^{\zeta,p}_{k}L^{2}_{T}L^{2}_{v,D}}.
	\end{split}
	\end{equation}
\end{lemma}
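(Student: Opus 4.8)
The plan is to close the coupled system formed by Lemmas \ref{lemma1}, \ref{lemma2} and \ref{lemma3}. Since $\gamma+2s\ge0$, the velocity weight $w_{q,\theta}$ prescribed in \eqref{vw} is identically $1$, so every weighted norm in sight coincides with its unweighted counterpart, and Lemma \ref{lemma3} is available with $u(v)=\mu^{1/4}$, which indeed depends only on $v$ and decays rapidly. The strategy is: (i) control the full dissipation norm by its microscopic part plus the macroscopic component $[a,b,c]$; (ii) feed Lemma \ref{lemma3} into the macroscopic estimate \eqref{me}; (iii) absorb the $\eta$-term in \eqref{lemma1.p1} and combine everything.

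For step (i), I would write $\hat f=\FP\hat f+\{\FI-\FP\}\hat f$ and use the triangle inequality for $|\cdot|_D$. Since $\FP\hat f=\{\hat a+\hat b\cdot v+(|v|^{2}-3)\hat c\}\mu^{1/2}$ is a fixed Gaussian-weighted polynomial in $v$, the quadratic form defining $|\cdot|_D$ evaluated at $\FP\hat f$ has finite coefficients (the kernel $|v-u|^{\gamma}$ with $-3<\gamma\le1$ is integrated against rapidly decaying factors), hence $|\FP\hat f(\tau,k)|_D\lesssim|[\hat a,\hat b,\hat c](\tau,k)|$ uniformly in $\tau$ and $k$. Squaring, integrating in $\tau$ over $[0,T]$, weighting by $\langle k\rangle^{\zeta p}$, integrating over $\Z^{3}_{k}$, and using the elementary equivalence $(A^{p}+B^{p})^{1/p}\sim A+B$ already exploited in Lemma \ref{lemma1}, this gives
\[
\|f\|_{W^{\zeta,p}_{k}L^{2}_{T}L^{2}_{v,D}}\lesssim\|\{\FI-\FP\}f\|_{W^{\zeta,p}_{k}L^{2}_{T}L^{2}_{v,D}}+\|[a,b,c]\|_{W^{\zeta,p}_{k}L^{2}_{T}}.
\]

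For steps (ii) and (iii): applying Lemma \ref{lemma3} with $g=f$ and $u=\mu^{1/4}$ bounds the last term of \eqref{me} by $\|f\|_{W^{\zeta,p}_{k}L^{\infty}_{T}L^{2}_{v}}\|f\|_{W^{\zeta,p}_{k}L^{2}_{T}L^{2}_{v,D}}$, so Lemma \ref{lemma2} becomes
\[
\|[a,b,c]\|_{W^{\zeta,p}_{k}L^{2}_{T}}\lesssim\|\{\FI-\FP\}f\|_{W^{\zeta,p}_{k}L^{2}_{T}L^{2}_{v,D}}+\|f\|_{W^{\zeta,p}_{k}L^{\infty}_{T}L^{2}_{v}}+\|f_{0}\|_{W^{\zeta,p}_{k}L^{2}_{v}}+\|f\|_{W^{\zeta,p}_{k}L^{\infty}_{T}L^{2}_{v}}\|f\|_{W^{\zeta,p}_{k}L^{2}_{T}L^{2}_{v,D}}.
\]
Meanwhile, fixing $\eta$ small in \eqref{lemma1.p1} absorbs $\eta\|\{\FI-\FP\}f\|_{W^{\zeta,p}_{k}L^{2}_{T}L^{2}_{v,D}}$ into the left side, yielding
\[
\|f\|_{W^{\zeta,p}_{k}L^{\infty}_{T}L^{2}_{v}}+\|\{\FI-\FP\}f\|_{W^{\zeta,p}_{k}L^{2}_{T}L^{2}_{v,D}}\lesssim\|f_{0}\|_{W^{\zeta,p}_{k}L^{2}_{v}}+\|f\|_{W^{\zeta,p}_{k}L^{\infty}_{T}L^{2}_{v}}\|f\|_{W^{\zeta,p}_{k}L^{2}_{T}L^{2}_{v,D}}.
\]
Substituting this into the macroscopic bound controls $\|[a,b,c]\|_{W^{\zeta,p}_{k}L^{2}_{T}}$ by the same right-hand side; then the splitting inequality of step (i) upgrades the microscopic dissipation to the full $\|f\|_{W^{\zeta,p}_{k}L^{2}_{T}L^{2}_{v,D}}$, and adding the three estimates together with one more use of $(D^{p}+E^{p}+F^{p})^{1/p}\sim D+E+F$ gives \eqref{lemma6.p1}.

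The main obstacle is bookkeeping rather than analysis: one must order the absorptions so that the small constant $\eta$ is chosen after, and independently of, the final constant, and one must check that the only term on the right containing $\|f\|_{W^{\zeta,p}_{k}L^{2}_{T}L^{2}_{v,D}}$ is the genuinely quadratic product $\|f\|_{W^{\zeta,p}_{k}L^{\infty}_{T}L^{2}_{v}}\|f\|_{W^{\zeta,p}_{k}L^{2}_{T}L^{2}_{v,D}}$, which is kept as in \eqref{lemma6.p1} and never absorbed. The only genuine computation is the uniform estimate $|\FP\hat f|_D\lesssim|[\hat a,\hat b,\hat c]|$, which relies only on the Gaussian decay of the macroscopic profiles and on $-3<\gamma\le1$.
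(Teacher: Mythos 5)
Your proposal is correct and follows essentially the same route as the paper: the paper's proof consists precisely of the norm equivalence $\|\FP f\|_{W^{\zeta,p}_{k}L^{2}_{T}L^{2}_{v,D}}\sim\|[a,b,c]\|_{W^{\zeta,p}_{k}L^{2}_{T}}$ combined with Lemmas \ref{lemma1}, \ref{lemma2} and \ref{lemma3}, which is exactly the chain of absorptions you spell out (you only need the one direction $|\FP\hat f|_D\lesssim|[\hat a,\hat b,\hat c]|$, as you note). Your write-up merely makes explicit the bookkeeping that the paper leaves as "follows directly".
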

\begin{proof}
	Notice that there exists $ C>0 $ such that
	\begin{equation}\nonumber
    \frac{1}{C}\|\FP f\|_{W^{\zeta,p}_{k}L^{2}_{T}L^{2}_{v,D}}\leq \|[a,b,c]\|_{W^{\zeta,p}_{k}L^{2}_{T}}\leq C\|\FP f\|_{W^{\zeta,p}_{k}L^{2}_{T}L^{2}_{v,D}}.
	\end{equation}
	The desired estimate \eqref{lemma6.p1} follows directly from Lemma \ref{lemma1}, Lemma \ref{lemma2}, Lemma \ref{lemma3} and the above inequality.
\end{proof}
Now we have finished the estimates needed for the hard potential case. To treat the soft potnetial case, especially the long-time decay rate in that case, the velocity weight defined as (\ref{dvw}) is necessary. We would like to prove an estimate that is similar to Lemma \ref{lemma6}. We first prove a lemma.

\begin{lemma}\label{lemma5}
	Let $ \gamma+2s<0 $. If $ (q,\theta) $ satisfies {\rm(\ref{vw})} then we have
	\begin{equation}\label{lemma5.p1}
	\begin{split}
	&\{\int_{\Z^{3}_{k}}\left\langle k \right\rangle^{\zeta p}(\int_{0}^{T}|(\hat{\Gamma}(\hat{f},\hat{g}),w^{2}_{q,\theta}\hat{h})_{L^{2}_{v}}|\,d\tau)^{\frac{p}{2}}\,d\,\Sigma(k)\}^{\frac{1}{p}}\\
	\leq &C(\|w_{q,\theta}f\|_{W^{\zeta,p}_{k}L^{\infty}_{T}L^{2}_{v}}\|w_{q,\theta}g\|_{W^{\zeta,p}_{k}L^{2}_{T}L^{2}_{v,D}}+\|w_{q,\theta}g\|_{W^{\zeta,p}_{k}L^{\infty}_{T}L^{2}_{v}}\|w_{q,\theta}f\|_{W^{\zeta,p}_{k}L^{2}_{T}L^{2}_{v,D}})\\
	&+\eta\|w_{q,\theta}h\|_{W^{\zeta,p}_{k}L^{2}_{T}L^{2}_{v}},
	\end{split}
	\end{equation}
	where the constant $ C $ only depends on $ \eta $.
\end{lemma}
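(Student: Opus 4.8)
The plan is to mimic the structure of the proof of Lemma~\ref{lemma3}, but carefully track the exponential velocity weight $w_{q,\theta}$ and the extra loss term $\eta\|w_{q,\theta}h\|_{W^{\zeta,p}_kL^2_TL^2_v}$ that appears because for soft potentials the weighted $\Gamma$-estimate necessarily costs a little bit of the $L^2_v$-norm of the test function (rather than being controlled purely by $D$-norms). First I would invoke the weighted trilinear estimate from \cite{DLSS} (the soft-potential analogue of \cite[Lemma 4.1]{DLSS}) applied to $\hat f(\tau,k-l)$, $\hat g(\tau,l)$, $w^2_{q,\theta}\hat h(\tau,k)$, which should give a pointwise-in-$(\tau,k,l)$ bound of the form
\begin{equation*}
|(\hat\Gamma(\hat f(\tau,k-l),\hat g(\tau,l)),w^2_{q,\theta}\hat h(\tau,k))_{L^2_v}|\lesssim \big(\|w_{q,\theta}\hat f(\tau,k-l)\|_{L^2_v}|w_{q,\theta}\hat g(\tau,l)|_D+\|w_{q,\theta}\hat g(\tau,l)\|_{L^2_v}|w_{q,\theta}\hat f(\tau,k-l)|_D\big)\big(|w_{q,\theta}\hat h(\tau,k)|_D+\|w_{q,\theta}\hat h(\tau,k)\|_{L^2_v}\big).
\end{equation*}
The $\|w_{q,\theta}\hat h\|_{L^2_v}$ factor is exactly what produces the $\eta$-term after a Young's inequality; the $|w_{q,\theta}\hat h|_D$ factor is absorbed into the product of the other two weighted norms just as in Lemma~\ref{lemma3}.

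Next I would split, using $\langle k\rangle^\zeta\lesssim \langle k-l\rangle^\zeta+\langle l\rangle^\zeta$ as in \eqref{ei}, into the two symmetric pieces and carry out the same chain of inequalities used in Lemmas~\ref{lemma1} and~\ref{lemma3}: first a Cauchy–Schwarz in $\tau$ (to peel off the $\eta$-term via Young's inequality, with the $|w_{q,\theta}\hat h|_D$ contribution landing in $\|w_{q,\theta}h\|_{W^{\zeta,p}_kL^2_TL^2_{v,D}}$), then Minkowski's inequality to move the $d\Sigma(l)$-integral outside the $L^p_k$ norm, and finally the observation that the $\ell^1_l$-norm of either $\big(\int_0^T|w_{q,\theta}\hat f(\tau,l)|_D^2\,d\tau\big)^{1/2}$ or $\sup_{\tau}\|w_{q,\theta}\hat f(\tau,l)\|_{L^2_v}$ is dominated, thanks to $\zeta p'>3$ and Hölder, by $\|w_{q,\theta}f\|_{W^{\zeta,p}_kL^2_TL^2_{v,D}}$ resp. $\|w_{q,\theta}f\|_{W^{\zeta,p}_kL^\infty_TL^2_v}$ — these two embeddings were already established inside the proof of Lemma~\ref{lemma1}. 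Collecting the pieces and using the elementary norm-equivalence $\frac1C(D+E)\le(D^p+E^p)^{1/p}\le C(D+E)$ yields \eqref{lemma5.p1}.

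There is one structural subtlety worth flagging: the left side of \eqref{lemma5.p1} involves $\int_0^T|(\cdots)|\,d\tau$ raised to the power $p/2$, i.e.\ an $L^1_T$ (not $L^2_T$) time integral, unlike Lemma~\ref{lemma3} where one has $\big(\int_0^T|(\cdots)|^2\,d\tau\big)^{p/2}$. Accordingly, when I bound the pointwise product above I should write $\int_0^T(\cdots)\,d\tau$ and then apply Cauchy–Schwarz in $\tau$ directly to the product $(\text{weighted }f\text{ or }g\text{ factor})\times(|w_{q,\theta}\hat h|_D+\|w_{q,\theta}\hat h\|_{L^2_v})$, which is in fact more convenient — it immediately separates an $\int_0^T|w_{q,\theta}\hat h(\tau,k)|_D^2\,d\tau$ (absorbable, times $\eta$ after Young) plus $\int_0^T\|w_{q,\theta}\hat h(\tau,k)\|^2_{L^2_v}\,d\tau$ (the genuine $\eta\|w_{q,\theta}h\|_{W^{\zeta,p}_kL^2_TL^2_v}$ term) from a $\big(\int_0^T(\int_{\Z^3_l}\cdots\,d\Sigma(l))^2\,d\tau\big)$-type factor that is then handled exactly as in \eqref{ineq0}--\eqref{ineq3}.

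The main obstacle I expect is purely bookkeeping rather than conceptual: one must make sure that the exponential weight $w_{q,\theta}=e^{q\langle v\rangle^\theta}$ can be distributed correctly through the non-cutoff collision kernel in the trilinear estimate so that each of the three slots carries its own weight — this is where the soft-potential commutator estimates of \cite{DLSS} are essential, and it is the reason the hypothesis \eqref{vw} ($\theta=1$ when $-3<\gamma<-2s$) is imposed. Once that weighted trilinear bound is in hand, everything downstream is a verbatim repetition of the $\ell^p$-over-$\ell^1\times\ell^p$ convolution arguments already performed in Lemmas~\ref{lemma1}--\ref{lemma3}, so I would keep that part brief and refer back to those proofs.
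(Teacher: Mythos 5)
Your overall machinery is the paper's: the weighted trilinear estimate of \cite{DLSS}, the splitting $\langle k\rangle^{\zeta}\lesssim\langle k-l\rangle^{\zeta}+\langle l\rangle^{\zeta}$, Cauchy--Schwarz in $\tau$ followed by Young's inequality with $\eta$, Minkowski's inequality, and the two $\ell^{1}_{l}$ embeddings already established in Lemma \ref{lemma1}; you also correctly notice that the time integral here is $L^{1}_{T}$ rather than $L^{2}_{T}$ and handle it as in \eqref{ineq}. The discrepancy is entirely in the $h$-slot. The bound the paper quotes from \cite[Lemma 4.2]{DLSS} carries only the factor $|w_{q,\theta}\hat h(k)|_{D}$ --- there is no additional $\|w_{q,\theta}\hat h\|_{L^{2}_{v}}$ term --- and the $\eta$-term in \eqref{lemma5.p1} is produced precisely by the Cauchy--Schwarz-in-$\tau$/Young step applied to that single $D$-factor (``the same method used to estimate \eqref{ineq}''). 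You instead postulate an extra $\|w_{q,\theta}\hat h\|_{L^{2}_{v}}$ factor in the trilinear bound and designate it as the source of the $\eta$-term, while the genuine $|w_{q,\theta}\hat h|_{D}$ contribution is left over as an $\eta\|w_{q,\theta}h\|_{W^{\zeta,p}_{k}L^{2}_{T}L^{2}_{v,D}}$ term. That term does not appear in the stated conclusion, and for soft potentials it is not dominated by $\eta\|w_{q,\theta}h\|_{W^{\zeta,p}_{k}L^{2}_{T}L^{2}_{v}}$ (the $D$-norm contains a fractional-regularity piece that the plain $L^{2}_{v}$ norm does not control), so as a proof of the literal inequality \eqref{lemma5.p1} your argument does not close.

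That said, the mismatch is arguably with the statement rather than with the substance: following the paper's own method the $\eta$-term should read $\eta\|w_{q,\theta}h\|_{W^{\zeta,p}_{k}L^{2}_{T}L^{2}_{v,D}}$ (the $L^{2}_{v}$ in the paper looks like a slip), and in the lemma's only application (Lemma \ref{lemma7}, with $h=f$) a $D$-norm $\eta$-term is exactly what gets absorbed into the left-hand side. One further correction: your opening remark that ``the $|w_{q,\theta}\hat h|_{D}$ factor is absorbed into the product of the other two weighted norms just as in Lemma \ref{lemma3}'' is not right --- in Lemma \ref{lemma3} the test function $u$ is fixed, so $|u|_{D}$ is a harmless constant, whereas here $\hat h$ is variable and its $D$-factor must be peeled off by Young's inequality, as you in fact do later in the proposal. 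Apart from these points about the $h$-slot, the convolution bookkeeping you outline coincides with the paper's proof.
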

\begin{proof}
	From the proof of \cite[Lemma 4.2]{DLSS}, we know that
	\begin{equation}\nonumber
	\begin{split}
	&|(\hat{\Gamma}(\hat{f},\hat{g}),w^{2}_{q,\theta}\hat{h})_{L^{2}_{v}}|\\
	\lesssim& \int_{\Z^{3}_{l}}(\|w_{q,\theta}\hat{f}(k-l)\|_{L^{2}_{v}}|w_{q,\theta}\hat{g}(l)|_{D}+|w_{q,\theta}\hat{f}(k-l)|_{D}\|w_{q,\theta}\hat{g}(l)\|_{L^{2}_{v}})\\
	&\times|w_{q,\theta}\hat{h}(k)|_{D}\,d\,\Sigma(l).
	\end{split}
	\end{equation}
	Then we have 
	\begin{equation}\nonumber
	\begin{split}
	&\int_{\mathbb{Z}^{3}_{k}}\left\langle k \right\rangle^{\zeta p}(\int_{0}^{T}|(\hat{\Gamma}(\hat{f},\hat{g}),w^{2}_{q,\theta}\hat{h})_{L^{2}_{v}}|\,d\tau)^{\frac{p}{2}}\,d\,\Sigma(k)\\
	&\lesssim\int_{\mathbb{Z}^{3}_{k}}\left\langle k \right\rangle^{\zeta p}\{\int_{0}^{T}(|w_{q,\theta}\hat{h}(k)|_{D})(\int_{\mathbb{Z}^{3}_{l}}\|w_{q,\theta}\hat{f}(k-l)\|_{L^{2}_{v}}|w_{q,\theta}\hat{g}(l)|_{D}\,d\,\Sigma(l))\,d\tau\}^{\frac{p}{2}}\,d\,\Sigma(k)\\
	&+\int_{\mathbb{Z}^{3}_{k}}\left\langle k \right\rangle^{\zeta p}\{\int_{0}^{T}(|w_{q,\theta}\hat{h}(k)|_{D})(\int_{\mathbb{Z}^{3}_{l}}\|w_{q,\theta}\hat{g}(l)\|_{L^{2}_{v}}|w_{q,\theta}\hat{f}(k-l)|_{D}\,d\,\Sigma(l))\,d\tau\}^{\frac{p}{2}}\,d\,\Sigma(k).
	\end{split}
	\end{equation}
	Applying the same method used to estimate (\ref{ineq}), we can deduce that
	\begin{equation}\nonumber
	\begin{split}
	&\int_{\mathbb{Z}^{3}_{k}}\left\langle k \right\rangle^{\zeta p}\{\int_{0}^{T}(|w_{q,\theta}\hat{h}(k)|_{D})(\int_{\mathbb{Z}^{3}_{l}}\|w_{q,\theta}\hat{f}(k-l)\|_{L^{2}_{v}}|w_{q,\theta}\hat{g}(l)|_{D}\,d\,\Sigma(l))\,d\tau\}^{\frac{p}{2}}\,d\,\Sigma(k)\\
	&\leq \eta\|w_{q,\theta}h\|_{W^{\zeta,p}_{k}L^{2}_{T}L^{2}_{v}}^{p}+C_{\eta}\|w_{q,\theta}f\|_{W^{\zeta,p}_{k}L^{\infty}_{T}L^{2}_{v}}^{p}\|w_{q,\theta}g\|_{W^{\zeta,p}_{k}L^{2}_{T}L^{2}_{v,D}}^{p};\\
	&\int_{\mathbb{Z}^{3}_{k}}\left\langle k \right\rangle^{\zeta p}\{\int_{0}^{T}(|w_{q,\theta}\hat{h}(k)|_{D})(\int_{\mathbb{Z}^{3}_{l}}\|w_{q,\theta}\hat{g}(l)\|_{L^{2}_{v}}|w_{q,\theta}\hat{f}(k-l)|_{D}\,d\,\Sigma(l))\,d\tau\}^{\frac{p}{2}}\,d\,\Sigma(k)\\
	&\leq\eta\|w_{q,\theta}h\|_{W^{\zeta,p}_{k}L^{2}_{T}L^{2}_{v}}^{p}+C_{\eta}\|w_{q,\theta}g\|_{W^{\zeta,p}_{k}L^{\infty}_{T}L^{2}_{v}}^{p}\|w_{q,\theta}f\|_{W^{\zeta,p}_{k}L^{2}_{T}L^{2}_{v,D}}^{p}.
	\end{split}
	\end{equation}
	Combining all above, we prove \eqref{lemma5.p1} and finish the proof of Lemma \ref{lemma5}.
\end{proof}

Applying Lemma \ref{lemma5}, we are now able to deduce a similar estimate as Lemma \ref{lemma6} for the soft potential case including the velocity weight defined as (\ref{dvw}).

\begin{lemma}\label{lemma7}
	Let $ \gamma+2s<0 $. If $ (q,\theta) $ satisfies {\rm(\ref{vw})}, then we have
	\begin{equation}\label{lemma7.p1}
	\begin{split}
	&\|w_{q,\theta}f\|_{W^{\zeta,p}_{k}L^{\infty}_{T}L^{2}_{v}}+\|w_{q,\theta}f\|_{W^{\zeta,p}_{k}L^{2}_{T}L^{2}_{v,D}}\\
	&\lesssim\|w_{q,\theta}f_{0}\|_{W^{\zeta,p}_{k}L^{2}_{v}}+\|w_{q,\theta}f\|_{W^{\zeta,p}_{k}L^{\infty}_{T}L^{2}_{v}}\|w_{q,\theta}f\|_{W^{\zeta,p}_{k}L^{2}_{T}L^{2}_{v,D}}.
	\end{split}
	\end{equation}
\end{lemma}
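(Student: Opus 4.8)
The plan is to mimic the argument that proved Lemma \ref{lemma6} in the hard potential case, but carry the exponential velocity weight $w_{q,\theta}$ through every step. First I would take the Fourier transform in $x$ of the equation \eqref{LBeq}, obtaining \eqref{feq}, and then instead of pairing with $\bar{\hat f}$ I would pair with $w_{q,\theta}^{2}\bar{\hat f}$ in $L^{2}_{v}$ and take the real part. This produces the identity
\begin{equation*}
\frac{1}{2}\frac{d}{dt}\|w_{q,\theta}\hat f\|_{L^{2}_{v}}^{2}+\Re(L\hat f,w_{q,\theta}^{2}\hat f)_{L^{2}_{v}}=\Re(\hat\Gamma(\hat f,\hat f),w_{q,\theta}^{2}\hat f)_{L^{2}_{v}}.
\end{equation*}
The key input is a weighted coercivity estimate for $L$: for $-3<\gamma<-2s$ and $(q,\theta)$ as in \eqref{vw}, one has the lower bound $\Re(L\hat f,w_{q,\theta}^{2}\hat f)_{L^{2}_{v}}\gtrsim |w_{q,\theta}\{\FI-\FP\}\hat f|_{D}^{2}-C\|\hat f\|_{L^{2}_{v}}^{2}$ (or a localized version with a cutoff in $v$), which is exactly the type of weighted estimate established in \cite{DLSS}; I would cite it rather than reprove it. The lower-order term $C\|\hat f\|_{L^2_v}^2$ is controlled because it carries \emph{no} weight and can be absorbed using the unweighted macroscopic bound of Lemma \ref{lemma2} together with Lemma \ref{lemma1}.

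Next I would integrate in $t$ and in $k$ exactly as in Lemma \ref{lemma1}: use the elementary two-sided inequality relating $(A^{p}+B^{p})^{2/p}$ and $A^{2}+B^{2}$, take the supremum over $t\in[0,T]$, multiply by $\langle k\rangle^{\zeta p}$, and integrate over $\Z^{3}_{k}$. This reduces matters to estimating the nonlinear term
\begin{equation*}
\Big(\int_{\Z^{3}_{k}}\langle k\rangle^{\zeta p}\Big(\int_{0}^{T}|(\hat\Gamma(\hat f,\hat f),w_{q,\theta}^{2}\hat f)_{L^{2}_{v}}|\,d\tau\Big)^{p/2}d\,\Sigma(k)\Big)^{1/p},
\end{equation*}
and here I invoke Lemma \ref{lemma5} with $g=h=f$ and $\eta$ chosen small, which bounds it by $C_\eta\|w_{q,\theta}f\|_{W^{\zeta,p}_{k}L^{\infty}_{T}L^{2}_{v}}\|w_{q,\theta}f\|_{W^{\zeta,p}_{k}L^{2}_{T}L^{2}_{v,D}}+\eta\|w_{q,\theta}f\|_{W^{\zeta,p}_{k}L^{2}_{T}L^{2}_{v}}^{p\cdot(1/p)}$ (after taking $p$-th roots via the elementary $(D^p+E^p)^{1/p}\sim D+E$ trick). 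The $\eta$-term here is of the full norm $\|w_{q,\theta}f\|_{W^{\zeta,p}_{k}L^{2}_{T}L^{2}_{v}}$, which is weaker than the $D$-norm; it must be re-absorbed, and this is where the macroscopic estimate re-enters.

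The main obstacle — and the step deserving the most care — is closing the estimate, i.e.\ controlling $\|w_{q,\theta}\FP f\|_{W^{\zeta,p}_{k}L^{2}_{T}L^{2}_{v,D}}$ and the residual term $\eta\|w_{q,\theta}f\|_{W^{\zeta,p}_{k}L^{2}_{T}L^{2}_{v}}$ by the $\{\FI-\FP\}$ dissipation plus data. Since $\FP f=[a,b,c]\mu^{1/2}$ and $\mu^{1/2}$ already beats any exponential weight $e^{q\langle v\rangle}$ (as $q$ is fixed and $\theta=1$), one has $\|w_{q,\theta}\FP f\|_{W^{\zeta,p}_{k}L^{2}_{T}L^{2}_{v,D}}\lesssim\|[a,b,c]\|_{W^{\zeta,p}_{k}L^{2}_{T}}$, and the latter is estimated by Lemma \ref{lemma2}; the $\Gamma$-term appearing there is handled by Lemma \ref{lemma3} (valid because its hypothesis only requires a rapidly decaying weight $u=\mu^{1/4}$, independent of the sign of $\gamma+2s$) — here one must note $\|f\|\le \|w_{q,\theta}f\|$, so the unweighted norms on the right of Lemma \ref{lemma2} are dominated by the weighted ones. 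Combining the weighted energy inequality, the bound on $\|w_{q,\theta}\FP f\|_{D}$, Lemma \ref{lemma1} for the unweighted lower-order pieces, choosing $\eta$ small to absorb the residual $\{\FI-\FP\}$ and full-norm terms into the left-hand side, and finally passing from $p$-th powers to first powers via the elementary norm-equivalence, yields \eqref{lemma7.p1}.
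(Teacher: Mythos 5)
Your overall route coincides with the paper's: a weighted energy identity obtained by pairing the Fourier-transformed equation with $w_{q,\theta}^{2}\bar{\hat f}$, the weighted coercivity of $L$ from \cite{DLSS}, Lemma \ref{lemma5} for the trilinear term with $\eta$ small, and the unweighted estimates to close. However, the step you flag as "deserving the most care" is not where the danger lies; the real issue is in the lower-order term of your coercivity estimate. As you primarily state it, the defect term is $C\|\hat f\|_{L^{2}_{v}}^{2}$ over all of $\R^{3}_{v}$, and you claim it can be absorbed using Lemma \ref{lemma1} together with Lemma \ref{lemma2}. For $\gamma+2s<0$ this fails: Lemma \ref{lemma1} controls the microscopic part only in the dissipation norm $|\cdot|_{D}$, which is comparable to $L^{2}_{\gamma+2s}$ plus fractional regularity, and since $\langle v\rangle^{\gamma+2s}\to 0$ as $|v|\to\infty$ the unweighted $L^{2}_{v}$ norm of $\{\FI-\FP\}\hat f$ is \emph{not} dominated by $|\{\FI-\FP\}\hat f|_{D}$. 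The macroscopic estimate rescues only the $\FP$ part, so the time-integrated $\|\hat f\|_{L^{2}_{v}}^{2}$ cannot be absorbed in the soft-potential regime.

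The correct version is exactly your parenthetical alternative, and it is what the paper uses: \cite[Lemma A.3]{DLSS} gives $(Lg,w_{q,\theta}^{2}g)_{L^{2}_{v}}\geq\delta_{q}|w_{q,\theta}g|_{D}^{2}-C|g|^{2}_{L^{2}(B_{R})}$ with the defect \emph{localized} to a ball, where $1\lesssim\langle v\rangle^{\gamma+2s}$, hence $|g|^{2}_{L^{2}(B_{R})}\lesssim|g|^{2}_{L^{2}_{\gamma+2s}}\lesssim|g|^{2}_{D}$. The resulting term $C\|f\|_{W^{\zeta,p}_{k}L^{2}_{T}L^{2}_{v,D}}$ (unweighted, of the full $f$) is then controlled by $\|w_{q,\theta}f_{0}\|_{W^{\zeta,p}_{k}L^{2}_{v}}+\|w_{q,\theta}f\|_{W^{\zeta,p}_{k}L^{\infty}_{T}L^{2}_{v}}\|w_{q,\theta}f\|_{W^{\zeta,p}_{k}L^{2}_{T}L^{2}_{v,D}}$ by invoking the unweighted a priori bound of Lemma \ref{lemma6}, i.e.\ precisely the combination of Lemmas \ref{lemma1}--\ref{lemma3} you had in mind, but applied to the $D$-norm rather than to the plain $L^{2}_{v}$ norm. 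Note also that the weighted coercivity of \cite{DLSS} already produces the full $|w_{q,\theta}g|_{D}^{2}$ rather than only its microscopic projection, so your extra step of re-inserting $\|w_{q,\theta}\FP f\|_{W^{\zeta,p}_{k}L^{2}_{T}L^{2}_{v,D}}$ through the macroscopic estimate is unnecessary, though harmless since $\mu^{1/2}w_{q,\theta}$ still decays rapidly when $\theta=1$.
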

\begin{proof}
	Taking Fourier Transform with respect to $ x $ variable to the equation
	\begin{equation}\nonumber
	\pa_tf+v\cdot\na_xf
	+Lf=\Ga(f,f),
	\end{equation}
	we have 
	\begin{equation}\nonumber
	\pa_t\hat{f}(t,k,v)+iv\cdot\hat{f}(t,k,v)
	+L\hat{f}(t,k,v)=\hat{\Ga}(\hat{f},\hat{f})(t,k,v).
	\end{equation}
	Taking product with $ w_{q,\theta}^{2}\bar{\hat{f}} $ and taking real part, we have
	\begin{equation}\nonumber
	\frac{1}{2}\frac{d}{dt}|w_{q,\theta}\hat{f}|^{2}+\Re(L\hat{f},w_{q,\theta}^{2}\hat{f})=\Re(\Ga(\hat{f},\hat{f}),w_{q,\theta}^{2}\hat{f}).
	\end{equation}
	Here $ (\cdot,\cdot) $ denotes the inner product in the complex plane. We then integrate with respect to $ v $ and then $ t $ to deduce 
	\begin{equation}\nonumber
	\frac{1}{2}\|w_{q,\theta}\hat{f}\|^{2}_{L^{2}_{v}}+\int_{0}^{t}\Re(L\hat{f},w_{q,\theta}^{2}\hat{f})_{L^{2}_{v}}\,d\tau=\frac{1}{2}\|w_{q,\theta}\hat{f_{0}}\|^{2}_{L^{2}_{v}}+\int_{0}^{t}\Re(\Ga(\hat{f},\hat{f}),w_{q,\theta}^{2}\hat{f})_{L^{2}_{v}}\,d\tau.
	\end{equation}
	From \cite[Lemma A.3]{DLSS}, we know that
	\begin{equation}\nonumber
	(Lg,w_{q,\theta}^{2}g)_{L^{2}_{v}}\geq\delta_{q}|w_{q,\theta}g|_{D}^{2}-C|g|^{2}_{L^{2}(B_{R})}.
	\end{equation}
	Further from \cite{GS}, we know that $ |f|^{2}_{L^{2}_{\gamma+2s}}\leq|f|^{2}_{N^{s,\gamma}} $ and norm $ |f|_{N^{s,\gamma}} $ is equivalent to $ |f|_{D} $. Thus $ |f|_{L^{2}_{\gamma+2s}}\leq C|f|_{D} $. Hence we have 
	\begin{equation}\nonumber
	\begin{split}
	|g|^{2}_{L^{2}(B_{R})}=\int_{B_{R}}|g|^{2}\,dv&\leq C\int_{B_{R}}\langle v \rangle^{\gamma+2s}|g|^{2}\,dv\\
	&\leq C\int_{\R^{3}_{v}}\langle v \rangle^{\gamma+2s}|g|^{2}\,dv=C|g|^{2}_{L^{2}_{\gamma+2s}}\\&\leq C|g|_{D}^{2}.
	\end{split}
	\end{equation}
	So we have 
	\begin{equation}\nonumber
	(Lg,w_{q,\theta}^{2}g)_{L^{2}_{v}}\geq\delta_{q}|w_{q,\theta}g|_{D}^{2}-C|g|^{2}_{D}.
	\end{equation}
	Applying the above inequality, we have
	\begin{equation}\nonumber
	\begin{split}
	&\frac{1}{2}\|w_{q,\theta}\hat{f}\|^{2}_{L^{2}_{v}}+\delta_{0}\int_{0}^{t}|w_{q,\theta}\hat{f}|_{D}^{2}\,d\tau\\
	& \leq \frac{1}{2}\|w_{q,\theta}\hat{f_{0}}\|^{2}_{L^{2}_{v}}+\int_{0}^{t}\Re(\Ga(\hat{f},\hat{f}),w_{q,\theta}^{2}\hat{f})_{L^{2}_{v}}\,d\tau+C\int_{0}^{t}|\hat{f}|_{D}^{2}\,d\tau.
	\end{split}
	\end{equation} 
	As in the proof of Lemma \ref{lemma1}, we can deduce that
	\begin{equation}\nonumber
	\begin{split}
	&\|w_{q,\theta} f\|_{W^{\zeta,p}_{k}L^{\infty}_{T}L^{2}_{v}}+\{\int_{\mathbb{Z}^{3}_{k}}(\int_{0}^{T}|\left\langle k \right\rangle^{\zeta}w_{q,\theta}\hat{f}|_{D}^{2}\,d\tau)^{\frac{p}{2}}\,d\,\Sigma(k)\}^{\frac{1}{p}}\\
	\leq& C\|w_{q,\theta} f_{0}\|_{W^{\zeta,p}_{k}L^{2}_{v}}+C\|f\|_{W^{\zeta,p}_{k}L^{2}_{T}L^{2}_{v,D}}\\
	&+C[\int_{\mathbb{Z}^{3}_{k}}\left\langle k \right\rangle^{\zeta p}(\int_{0}^{T}|\Re(\hat{\Gamma}(\hat{f},\hat{f}),w_{q,\theta}^{2}\hat{f})_{L^{2}_{v}}|\,d\tau)^{\frac{p}{2}}\,d\,\Sigma(k)]^{\frac{1}{p}}.
	\end{split}
	\end{equation}
	Then we apply Lemma \ref{lemma5} to deduce
	\begin{equation}\nonumber
	\begin{split}
	&\|w_{q,\theta} f\|_{W^{\zeta,p}_{k}L^{\infty}_{T}L^{2}_{v}}+\|w_{q,\theta} f\|_{W^{\zeta,p}_{k}L^{2}_{T}L^{2}_{v,D}}\\
	\leq &C\|w_{q,\theta} f_{0}\|_{W^{\zeta,p}_{k}L^{2}_{v}}+C\eta\|w_{q,\theta} f\|_{W^{\zeta,p}_{k}L^{2}_{T}L^{2}_{v}}+C\|f\|_{W^{\zeta,p}_{k}L^{2}_{T}L^{2}_{v,D}}\\
	&+C\|w_{q,\theta} f\|_{W^{\zeta,p}_{k}L^{\infty}_{T}L^{2}_{v}}\|w_{q,\theta} f\|_{W^{\zeta,p}_{k}L^{2}_{T}L^{2}_{v,D}}.
	\end{split}
	\end{equation}
	Taking $ \eta $ to be small enough, we have
	\begin{equation}
	\begin{split}
	&\|w_{q,\theta} f\|_{W^{\zeta,p}_{k}L^{\infty}_{T}L^{2}_{v}}+\|w_{q,\theta} f\|_{W^{\zeta,p}_{k}L^{2}_{T}L^{2}_{v,D}}\\
	&\label{1}\leq C\{\|w_{q,\theta} f_{0}\|_{W^{\zeta,p}_{k}L^{2}_{v}}+\|w_{q,\theta}f\|_{W^{\zeta,p}_{k}L^{\infty}_{T}L^{2}_{v}}\|w_{q,\theta} f\|_{W^{\zeta,p}_{k}L^{2}_{T}L^{2}_{v,D}}+\|f\|_{W^{\zeta,p}_{k}L^{2}_{T}L^{2}_{v,D}}\}.
	\end{split}
	\end{equation}
	Noticing that from Lemma \ref{lemma6}, we have
	\begin{equation}\nonumber
	\begin{split}
	&\|f\|_{W^{\zeta,p}_{k}L^{2}_{T}L^{2}_{v,D}}\\
	&\leq\|f\|_{W^{\zeta,p}_{k}L^{\infty}_{T}L^{2}_{v}}+\|f\|_{W^{\zeta,p}_{k}L^{2}_{T}L^{2}_{v,D}}\\
	&\lesssim\|f_{0}\|_{W^{\zeta,p}_{k}L^{2}_{v}}+\|f\|_{W^{\zeta,p}_{k}L^{\infty}_{T}L^{2}_{v}}\|f\|_{W^{\zeta,p}_{k}L^{2}_{T}L^{2}_{v,D}}\\
	&\lesssim\|w_{q,\theta} f_{0}\|_{W^{\zeta,p}_{k}L^{2}_{v}}+\|w_{q,\theta} f\|_{W^{\zeta,p}_{k}L^{\infty}_{T}L^{2}_{v}}\|w_{q,\theta} f\|_{W^{\zeta,p}_{k}L^{2}_{T}L^{2}_{v,D}}.
	\end{split}
	\end{equation}
Applying the above inequality to bound the last term of (\ref{1}), we prove the desired estimate \eqref{lemma7.p1} and then finish the proof of Lemma \ref{lemma7}.
\end{proof}

We have concluded all the a priori estimates for the soft potential case. Now we present the following local-in-time existence result without any proof; the full details of the proof can be carried out as in \cite{DLSS,MS}.

\begin{lemma}[Local-in-time existence]\label{lemma4}
	Let all the assumptions in Theorem {\rm\ref{theorem1}} hold. Then there are constants $ \epsilon_{1}>0 $, $ T_{1}>0 $ and $ C_{1}>0 $ such that if initial data satisfy that $F_{0}(x,v)=\mu+\mu^{\frac{1}{2}}f_{0}(x,v)\geq0 $ and 
	\begin{equation*}
	\|w_{q,\theta}f_{0}\|_{W^{\zeta,p}_{k}L^{2}_{v}}\leq \epsilon_{0},
	\end{equation*}
	then the Cauchy problem {\rm(\ref{LBeq})} and {\rm(\ref{idf})} with {\rm(\ref{pt.id.cl.1})}, {\rm(\ref{pt.id.cl.2})} and {\rm(\ref{pt.id.cl.3})} for the Boltzmann equation in a torus domain admits a unique solution 
\begin{equation*}
	f\in W^{\zeta,p}_{k}L^{\infty}_{T_{1}}L^{2}_{v}\cap W^{\zeta,p}_{k}L^{2}_{T_{1}}L^{2}_{v,D}
	\end{equation*}
	satisfying	
%
	\begin{equation*}
	F(t,x,v)=\mu+\mu^{\frac{1}{2}}f(t,x,v)\geq0
	\end{equation*}
	and
	\begin{equation*}
	\|w_{q,\theta}f\|_{W^{\zeta,p}_{k}L^{\infty}_{T_{0}}L^{2}_{v}}+\|w_{q,\theta}f\|_{W^{\zeta,p}_{k}L^{2}_{T_{0}}L^{2}_{v,D}}\leq C_1\|w_{q,\theta}f_{0}\|_{W^{\zeta,p}_{k}L^{2}_{v}}.
	\end{equation*}
\end{lemma}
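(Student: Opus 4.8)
The plan is to construct the solution by a linear Picard iteration and pass to the limit, exactly along the lines of \cite{DLSS,MS}. Write $X_{T}:=W^{\zeta,p}_{k}L^{\infty}_{T}L^{2}_{v}\cap W^{\zeta,p}_{k}L^{2}_{T}L^{2}_{v,D}$ (with the weight $w_{q,\theta}$ inserted in the soft potential case). Set $f^{0}\equiv0$ and, given $f^{n}$, let $f^{n+1}$ solve the \emph{linear} Cauchy problem
\begin{equation}\nonumber
\pa_{t}f^{n+1}+v\cdot\na_{x}f^{n+1}+Lf^{n+1}=\Ga(f^{n},f^{n+1}),\qquad f^{n+1}|_{t=0}=f_{0}.
\end{equation}
Taking the Fourier transform in $x$ and running the energy computation of Lemma \ref{lemma1} (respectively Lemma \ref{lemma7} when the exponential weight is present), with $f^{n}$ treated as a known coefficient through the bilinear bound \cite[Lemma 3.2]{DLSS} and Lemma \ref{lemma5}, one obtains well-posedness of each linear step in $X_{T_{1}}$, so the scheme is well defined.

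Next I would derive uniform-in-$n$ bounds. Feeding the estimates behind Lemma \ref{lemma6} and Lemma \ref{lemma7} into the equation for $f^{n+1}$ gives, schematically,
\begin{equation}\nonumber
\|w_{q,\theta}f^{n+1}\|_{X_{T_{1}}}^{2}\le C\|w_{q,\theta}f_{0}\|_{W^{\zeta,p}_{k}L^{2}_{v}}^{2}+C\|w_{q,\theta}f^{n}\|_{X_{T_{1}}}\,\|w_{q,\theta}f^{n+1}\|_{X_{T_{1}}}^{2},
\end{equation}
so that, choosing $\epsilon_{0}$ small enough that $2C^{2}\epsilon_{0}\le\tfrac12$, an induction yields $\|w_{q,\theta}f^{n}\|_{X_{T_{1}}}\le 2C\|w_{q,\theta}f_{0}\|_{W^{\zeta,p}_{k}L^{2}_{v}}$ for all $n$. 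The difference $g^{n}:=f^{n+1}-f^{n}$ solves a linear equation with source $\Ga(f^{n}-f^{n-1},f^{n+1})+\Ga(f^{n-1},f^{n+1}-f^{n})$; applying the bilinear estimates of Lemma \ref{lemma3} and Lemma \ref{lemma5} gives $\|w_{q,\theta}g^{n}\|_{X_{T_{1}}}\le C\epsilon_{0}\,\|w_{q,\theta}g^{n-1}\|_{X_{T_{1}}}$, a contraction for $\epsilon_{0}$ small. Hence $f^{n}\to f$ in $X_{T_{1}}$, the limit solves \eqref{LBeq}, \eqref{idf}, satisfies the required bound with $C_{1}=2C$, and inherits the conservation laws \eqref{pt.id.cl.1}--\eqref{pt.id.cl.3} from $f_{0}$; applying the same contraction estimate to the difference of two solutions yields uniqueness.

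It remains to check $F=\mu+\mu^{\frac12}f\ge0$. This is the one place where the sign structure of $Q$ is used and, since the non-cutoff kernel $b(\cos\theta)$ is not integrable over $\S^{2}$, the naive gain/loss splitting is unavailable. The standard remedy, as in \cite{DLSS,MS}, is to carry out the entire construction first for the angularly truncated kernel $b_{\ep}=b\,\mathbf{1}_{\{\theta\ge\ep\}}$: for the truncated problem one writes the equation for $F_{\ep}^{n+1}$ in mild (Duhamel) form with the nonnegative loss frequency $\nu^{n}_{\ep}$ and the nonnegative gain operator $K^{n}_{\ep}$, and a further Picard layer propagates $F_{\ep}^{n+1}\ge0$ from $F_{0}\ge0$; passing to $n\to\infty$ gives $F_{\ep}\ge0$. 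One then sends $\ep\to0$, using that every constant in Lemmas \ref{lemma1}--\ref{lemma7} is independent of the angular cutoff, so the bounds in $X_{T_{1}}$ are uniform in $\ep$; the limit $F=\lim_{\ep\to0}F_{\ep}\ge0$ coincides, by uniqueness, with the solution built above. I expect this positivity step — specifically, securing the cutoff-uniform estimates that permit the passage $\ep\to0$ — to be the main obstacle, the remaining arguments being the routine fixed-point machinery already supported by the bilinear estimates of Section 2.
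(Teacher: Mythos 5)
The paper states this lemma without proof, delegating the details to \cite{DLSS,MS}; your outline — Picard iteration on the linearized problem $\pa_t f^{n+1}+v\cdot\na_x f^{n+1}+Lf^{n+1}=\Ga(f^n,f^{n+1})$, uniform bounds and contraction via the bilinear estimates of Section 2, and nonnegativity through the angular-cutoff approximation with cutoff-independent constants — is exactly the strategy of those references, so it is essentially the same approach as the one the paper invokes. I see no gap beyond the level of detail the paper itself chose to omit.
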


\section{Proof of the main theorem}
In this section, we will present the proof of Theorem \ref{theorem1}. First, we prove the global-in-time existence result along with the uniform estimate (\ref{ne}). The main tool is Lemma \ref{lemma6} and Lemma \ref{lemma7}. Then we will deal with the long-time behavior separately for the hard potnetial case and the soft potential case. The key point is to consider $ \hat{h}=e^{\lambda t^{r}}\hat{f} $, which satisfies
\begin{equation}\nonumber
\partial_{t}\hat{h}+ik\cdot v\hat{h}+L\hat{h}=e^{-\lambda t^{r}}\hat{\Gamma}(\hat{h},\hat{h})+\lambda rt^{r-1}\hat{h}.
\end{equation}
We use the same method used in this paper before to deduce a uniform estimate for $ h $. Then the anticipated decay rate of $ f $ follows. Now we present the proof.
\begin{proof}[Proof of the main theorem]
	First from Lemma \ref{lemma6} and Lemma \ref{lemma7}, we deduce that
	\begin{equation}\nonumber
	\begin{split}
	&\|w_{q,\theta}f\|_{W^{\zeta,p}_{k}L^{\infty}_{T}L^{2}_{v}}+\|w_{q,\theta}f\|_{W^{\zeta,p}_{k}L^{2}_{T}L^{2}_{v,D}}\\
	&\lesssim\|w_{q,\theta}f_{0}\|_{W^{\zeta,p}_{k}L^{2}_{v}}+\|w_{q,\theta}f\|_{W^{\zeta,p}_{k}L^{\infty}_{T}L^{2}_{v}}\|w_{q,\theta}f\|_{W^{\zeta,p}_{k}L^{2}_{T}L^{2}_{v,D}}
	\end{split}
	\end{equation}
	holds for both hard potential case and soft potential case. Then with the smallness assumption of $ \|w_{q,\theta}f_{0}\|_{W^{\zeta,p}_{k}L^{2}_{v}} $, we can deduce that
	\begin{equation}\nonumber
	\|w_{q,\theta}f\|_{W^{\zeta,p}_{k}L^{\infty}_{T}L^{2}_{v}}+\|w_{q,\theta}f\|_{W^{\zeta,p}_{k}L^{2}_{T}L^{2}_{v,D}}\lesssim\|w_{q,\theta}f_{0}\|_{W^{\zeta,p}_{k}L^{2}_{v}}.
	\end{equation}
	Thus with the local-in-time existence result Lemma \ref{lemma4}, we can get the global-in-time existence and uniqueness with 
	\begin{equation}\nonumber
	\|w_{q,\theta}f\|_{W^{\zeta,p}_{k}L^{\infty}_{T}L^{2}_{v}}+\|w_{q,\theta}f\|_{W^{\zeta,p}_{k}L^{2}_{T}L^{2}_{v,D}}\lesssim\|w_{q,\theta}f_{0}\|_{W^{\zeta,p}_{k}L^{2}_{v}}.
	\end{equation}
    	
	Next we prove the long-time decay rate. Let $ \hat{h}=e^{\lambda t^{r}}\hat{f} $, where $ \lambda>0 $ and $ r\in(0,1] $ are undetermined constants. Plug it into (\ref{feq}) to deduce that $ \hat{h} $ satisfies 
	\begin{equation}\nonumber
	\partial_{t}\hat{h}+ik\cdot v\hat{h}+L\hat{h}=e^{-\lambda t^{r}}\hat{\Gamma}(\hat{h},\hat{h})+\lambda rt^{r-1}\hat{h}
	\end{equation}
	with $ \hat{h}(0,k,v)=\hat{h}_{0}(k,v) $.
	Taking product with $ \bar{\hat{h}} $ and use the same method as in Lemma \ref{lemma1}, Lemma \ref{lemma2}, Lemma \ref{lemma3} and Lemma \ref{lemma6} to get
	\begin{equation}\nonumber
	\begin{split}
	&\|h\|_{W^{\zeta,p}_{k}L^{\infty}_{T}L^{2}_{v}}+\|h\|_{W^{\zeta,p}_{k}L^{2}_{T}L^{2}_{v,D}}\\
	&\lesssim\|h_{0}\|_{W^{\zeta,p}_{k}L^{2}_{v}}+(\lambda r)^{\frac{1}{2}}\{\int_{\Z^{3}_{k}}(\int_{0}^{T}\tau^{r-1}\|\langle k\rangle^{\zeta}\hat{h}(\tau,k)\|^{2}_{L^{2}_{v}}\,d\tau)^{\frac{p}{2}}\,d\,\Sigma(k)\}^{\frac{1}{p}}.
	\end{split}
	\end{equation}
	
	For the hard potential case, set $ r=1 $. We have
	\begin{equation}\nonumber
	\begin{split}
	&(\lambda )^{\frac{1}{2}}\{\int_{\Z^{3}_{k}}(\int_{0}^{T}\|\langle k\rangle^{\zeta}\hat{h}(\tau,k)\|^{2}_{L^{2}_{v}}\,d\tau)^{\frac{p}{2}}\,d\,\Sigma(k)\}^{\frac{1}{p}}\\
	&\leq(\lambda )^{\frac{1}{2}}\{\int_{\Z^{3}_{k}}(\int_{0}^{T}\|\langle v \rangle^{\gamma+2s}\langle k\rangle^{\zeta}\hat{h}(\tau,k)\|^{2}_{L^{2}_{v}}\,d\tau)^{\frac{p}{2}}\,d\,\Sigma(k)\}^{\frac{1}{p}}\\
	& =(\lambda )^{\frac{1}{2}}\{\int_{\Z^{3}_{k}}(\int_{0}^{T}\|\langle k\rangle^{\zeta}\hat{h}\|_{L_{\gamma+2s}}^{2}\,d\tau)^{\frac{p}{2}}\,d\,\Sigma(k)\}^{\frac{1}{p}}\\
	&\leq(\lambda )^{\frac{1}{2}}\{\int_{\Z^{3}_{k}}(\int_{0}^{T}|\langle k\rangle^{\zeta}\hat{h}|_{D}^{2}\,d\tau)^{\frac{p}{2}}\,d\,\Sigma(k)\}^{\frac{1}{p}}=(\lambda )^{\frac{1}{2}}\|h\|_{W^{\zeta,p}_{k}L^{2}_{T}L^{2}_{v,D}}.
	\end{split}
	\end{equation}
	Taking $ \lambda $ to be small enough, we deduce 
	\begin{equation}\nonumber
	\|h\|_{W^{\zeta,p}_{k}L^{\infty}_{T}L^{2}_{v}}+\|h\|_{W^{\zeta,p}_{k}L^{2}_{T}L^{2}_{v,D}}\lesssim\|h_{0}\|_{W^{\zeta,p}_{k}L^{2}_{v}}.
	\end{equation}
	This is the estimate needed for the decay rate in the hard potential case. Thus we have 
	\begin{equation}\nonumber
	e^{\lambda t^{r}}\|f\|_{W^{\zeta,p}_{k}L^{2}_{v}}=\|h\|_{W^{\zeta,p}_{k}L^{2}_{v}}\lesssim \|f_{0}\|_{W^{\zeta,p}_{k}L^{2}_{v}}.
	\end{equation}
	Finally we deduce
	\begin{equation}\nonumber
	\|f\|_{W^{\zeta,p}_{k}L^{2}_{v}}\lesssim e^{-\lambda t^{r}}\|f_{0}\|_{W^{\zeta,p}_{k}L^{2}_{v}}.
	\end{equation}
	Taking $ \kappa=r=1 $ to finish the proof for the hard potential case.
	
	For the soft potential case, we set two undetermined constants $ \rho>0 $ and $ r'>0 $ and let
	\begin{equation}\nonumber
	\FE=\{\langle v \rangle\leq\rho \tau^{r'}\}.
	\end{equation}
	Then we have
	\begin{equation}\nonumber
	\begin{split}
	&(\lambda r)^{\frac{1}{2}}\{\int_{\Z^{3}_{k}}(\int_{0}^{T}\tau^{r-1}\|\langle k\rangle^{\zeta}\hat{h}(\tau,k)\|^{2}_{L^{2}_{v}}\,d\tau)^{\frac{p}{2}}\,d\,\Sigma(k)\}^{\frac{1}{p}}\\
	\leq&(\lambda r)^{\frac{1}{2}}\{\int_{\Z^{3}_{k}}(\int_{0}^{T}\int_{\R^{3}_{v}}\tau^{r-1}\mathbf{1}_{\FE}|\langle k\rangle^{\zeta}\hat{h}(\tau,k)|^{2}\,dv\,d\tau)^{\frac{p}{2}}\,d\,\Sigma(k)\}^{\frac{1}{p}}\\
	&+(\lambda r)^{\frac{1}{2}}\{\int_{\Z^{3}_{k}}(\int_{0}^{T}\int_{\R^{3}_{v}}\tau^{r-1}\mathbf{1}_{\FE^{c}}|\langle k\rangle^{\zeta}\hat{h}(\tau,k)|^{2}\,dv\,d\tau)^{\frac{p}{2}}\,d\,\Sigma(k)\}^{\frac{1}{p}}.
	\end{split}
	\end{equation}
	The former one is denoted as $ I_{1} $ and the latter one is denoted as $ I_{2} $. We now let $ r $ and $ r' $ satisfy $ \frac{r-1}{r'}=\gamma+2s<0 $. Since $ \frac{r-1}{r'}<0 $, we have $ (\frac{\langle v \rangle}{\rho})^{\gamma+2s}\geq \tau^{r'(\gamma+2s)}=\tau^{r-1} $ on $ \FE $. Thus we have
	\begin{equation}\nonumber
	\begin{split}
	I_{1}&\leq (\lambda r)^{\frac{1}{2}}\rho^{{-\frac{r-1}{2r'}}}\{\int_{\Z^{3}_{k}}(\int_{0}^{T}\int_{\R^{3}_{v}}\langle v \rangle^{\gamma+2s}|\langle k\rangle^{\zeta}\hat{h}(\tau,k)|^{2}\,dv\,d\tau)^{\frac{p}{2}}\,d\,\Sigma(k)\}^{\frac{1}{p}}\\
	&=(\lambda r)^{\frac{1}{2}}\rho^{{-\frac{r-1}{2r'}}}\{\int_{\Z^{3}_{k}}(\int_{0}^{T}|\langle k\rangle^{\zeta}\hat{h}(\tau,k)|^{2}_{L^{2}_{\gamma+2s}}\,d\tau)^{\frac{p}{2}}\,d\,\Sigma(k)\}^{\frac{1}{p}}.
	\end{split}
	\end{equation}
	Since we have known that $ |\langle k\rangle^{\zeta}\hat{h}(\tau,k)|^{2}_{L^{2}_{\gamma+2s}}\leq C|\langle k\rangle^{\zeta}\hat{h}(\tau,k)|_{D}^{2} $, we have
	\begin{equation}\nonumber
	\begin{split}
	I_{1}&\leq C(\lambda r)^{\frac{1}{2}}\rho^{{-\frac{r-1}{2r'}}}\{\int_{\Z^{3}_{k}}(\int_{0}^{T}|\langle k\rangle^{\zeta}\hat{h}(\tau,k)|^{2}_{D}\,d\tau)^{\frac{p}{2}}\,d\,\Sigma(k)\}^{\frac{1}{p}}\\
	&= C(\lambda r)^{\frac{1}{2}}\rho^{{-\frac{r-1}{2r'}}}\|h\|_{W^{\zeta,p}_{k}L^{2}_{T}L^{2}_{v,D}}.
	\end{split}
	\end{equation}
	Then we estimate $ I_{2} $. Since on $ \FE^{c} $, we have $ \langle v \rangle>\rho \tau^{r'} $. So $ w_{q,\theta}=e^{q\langle v \rangle}\geq e^{q\rho\tau^{r'}} $. Thus $ 1\leq w_{q,\theta}^{2}e^{-2q\rho\tau^{r'}} $ on $ \FE^{c} $. So we can estimate:
	\begin{equation}\nonumber
	\begin{split}
	I_{2}&\leq (\lambda r)^{\frac{1}{2}}\{\int_{\Z^{3}_{k}}[\int_{0}^{T}\tau^{r-1}e^{2\lambda\tau^{r}}e^{-2q\rho\tau^{r'}}(\int_{\FE^{c}}|\langle k\rangle^{\zeta}{w_{q,\theta}}\hat{f}|^{2}\,dv)\,d\tau]^{\frac{p}{2}}\,d\,\Sigma(k)\}^{\frac{1}{p}}\\
	&\leq  (\lambda r)^{\frac{1}{2}}\{\int_{\Z^{3}_{k}}\langle k\rangle^{\zeta p}\sup_{t\in [0,T]}\|w_{q,\theta}\hat{f}\|_{L^{2}_{v}}^{p}\,d\,\Sigma(k)\}^{\frac{1}{p}}(\int_{0}^{T}\tau^{r-1}e^{2\lambda\tau^{r}}e^{-2q\rho\tau^{r'}}\,d\tau)^{\frac{1}{2}}.
	\end{split}
	\end{equation}
	Now we set $ r=r' $, which implies $ r=\frac{1}{1+|\gamma+2s|} $, and let $ \lambda $ be small enough such that $ 2\lambda<2q\rho $. Then we have
	\begin{equation}\nonumber
	\begin{split}  \int_{0}^{T}\tau^{r-1}e^{2\lambda\tau^{r}}e^{-2q\rho\tau^{r'}}\,d\tau&=\int_{0}^{T}\tau^{r-1}e^{2\lambda\tau^{r}}e^{-2q\rho\tau^{r'}}\,d\tau\\
	&\leq\int_{0}^{\infty}\tau^{r-1}e^{2(\lambda-q\rho)\tau^{r}}\,d\tau< \infty .
	\end{split}
	\end{equation}
	Thus we can deduce that 
	\begin{equation}\nonumber
	I_{2}\leq C(\lambda r)^{\frac{1}{2}}\|w_{q,\theta}f\|_{W^{\zeta,p}_{k}L^{2}_{T}L^{2}_{v,D}}.
	\end{equation}
	Combining with the existence theorm, we further have
	\begin{equation}\nonumber
	I_{2}\leq C(\lambda r)^{\frac{1}{2}}\|w_{q,\theta}f_{0}\|_{W^{\zeta,p}_{k}L^{2}_{v}}.
	\end{equation}
	Thus we have 
	\begin{equation}\nonumber
	\begin{split}
	&\|h\|_{W^{\zeta,p}_{k}L^{\infty}_{T}L^{2}_{v}}+\|h\|_{W^{\zeta,p}_{k}L^{2}_{T}L^{2}_{v,D}}\\
	&\lesssim
	\|h_{0}\|_{W^{\zeta,p}_{k}L^{2}_{v}}+C(\lambda r)^{\frac{1}{2}}\rho^{-\frac{2(r-1)}{r'}}\|h\|_{W^{\zeta,p}_{k}L^{2}_{T}L^{2}_{v,D}}+C(\lambda r)^{\frac{1}{2}}\|w_{q,\theta}f_{0}\|_{W^{\zeta,p}_{k}L^{2}_{v}}.
	\end{split}
	\end{equation}
	Taking $ \lambda $ and $ \rho $ to be small enough and using the fact $ \|h_{0}\|_{W^{\zeta,p}_{k}L^{2}_{v}}\leq \|w_{q,\theta}f_{0}\|_{W^{\zeta,p}_{k}L^{2}_{v}}  $, we can deduce that
	\begin{equation}\nonumber
	\|h\|_{W^{\zeta,p}_{k}L^{\infty}_{T}L^{2}_{v}}+\|h\|_{W^{\zeta,p}_{k}L^{2}_{T}L^{2}_{v,D}}\lesssim \|w_{q,\theta}f_{0}\|_{W^{\zeta,p}_{k}L^{2}_{v}}.
	\end{equation}
	Thus we have 
	\begin{equation}\nonumber
	e^{\lambda t^{r}}\|f\|_{W^{\zeta,p}_{k}L^{2}_{v}}=\|h\|_{W^{\zeta,p}_{k}L^{2}_{v}}\lesssim \|w_{q,\theta}f_{0}\|_{W^{\zeta,p}_{k}L^{2}_{v}}.
	\end{equation}
	Finally we deduce
	\begin{equation}\nonumber
	\|f\|_{W^{\zeta,p}_{k}L^{2}_{v}}\lesssim e^{-\lambda t^{r}}\|w_{q,\theta}f_{0}\|_{W^{\zeta,p}_{k}L^{2}_{v}}.
	\end{equation}
	We take $ \kappa=r=\frac{1}{1+|\gamma+2s|} $ to finish the proof for the soft potential case. Therefore, \eqref{thm1.decay} holds true and the proof of Theorem \ref{theorem1} is completed. 
\end{proof}

\medskip
\noindent{\bf Acknowledgments:} The author was supervised by Professor Renjun Duan from The Chinese University of Hong Kong. The author would like to express sincere gratitude to Professor Renjun Duan for his patience and guidence throughout the research project in the summer of 2020.   

\medskip

\end{document}